\chardef\csname pre amssym.def at\endcsname=\the\catcode`\@
\def\undefine#1{\let#1\undefined}
\def\newsymbol#1#2#3#4#5{\let\next@\relax
 \ifnum#2=\@ne\let\next@\msafam@\else
 \ifnum#2=\tw@\let\next@\msbfam@\fi\fi
 \mathchardef#1="#3\next@#4#5}
\def\mathhexbox@#1#2#3{\relax
 \ifmmode\mathpalette{}{\m@th\mathchar"#1#2#3}%
 \else\leavevmode\hbox{$\m@th\mathchar"#1#2#3$}\fi}
\def\hexnumber@#1{\ifcase#1 0\or 1\or 2\or 3\or 4\or 5\or 6\or 7\or 8\or
 9\or A\or B\or C\or D\or E\or F\fi}
\font\tenmsa=msam10
\font\sevenmsa=msam7
\font\fivemsa=msam5
\edef\msafam@{\hexnumber@\msafam}
\mathchardef\dabar@"0\msafam@39
\def\dashrightarrow{\mathrel{\dabar@\dabar@\mathchar"0\msafam@4B}}
\def\dashleftarrow{\mathrel{\mathchar"0\msafam@4C\dabar@\dabar@}}
\font\tenmsb=msbm10
\font\sevenmsb=msbm7
\font\fivemsb=msbm5
\edef\msbfam@{\hexnumber@\msbfam}
\theoremstyle{plain}
\newtheorem{theorem}{Theorem}[section]
\newtheorem{lemma}[theorem]{Lemma}
\newtheorem{proposition}[theorem]{Proposition}
\newtheorem{corollary}[theorem]{Corollary}
\theoremstyle{definition}
\newtheorem{definition}[theorem]{Definition}
\newtheorem{example}[theorem]{Example}
\newtheorem{remark}[theorem]{Remark}
\def\para{\vspace{7mm}}
\def\deg{{\rm deg}}
\def\mult{{\rm mult}}
\def\cP{{\mathcal P}}
\def\dd{{\mathcal D}}
\def\cc{{\mathcal C}}
\def\N{{\mathbb N}}
\def\curva{{\sf Curve}}
\def\qed{\hfill  \framebox(5,5){}}
\def\para{\vspace{2 mm}}
\def\card{{\rm Card}}
\def\R{{\mathbb R}}
\def\C{{\mathbb C}}
\def\Z{{\mathbb Z}}
\def\K{{\mathbb K}}
\def\F{{\mathbb F}}
\def\L{{\mathbb L}}
\def\Q{{\mathbb Q}}
\def\proj{{\mathbb P}}
\def\card{{\rm card}}
\def\sup{{\rm sup}}
\def\inf{{\rm inf}}
\def\lim{{\rm lim}}
\def\ddd{{\rm d}}
\def\HH{{\rm H}}
\def\deg{{\rm deg}}
\def\mul{{\rm mult}}
\def\cP{{\mathcal P}}
\def\cc{{\mathcal C}}
\def\ii{{\mathrm i}}
\def\proj{\mathbb{P}^2(\mathbb{C})}
\begin{document}

\title{Rational Hausdorff Divisors: a New  approach to the Approximate Parametrization of Curves
\thanks{sonialuisa.rueda@upm.es, jsendra@euitt.upm.es, rafael.sendra@uah.es } }

%\title{Parametrizing Rational Hausdorff Linear Systems of Curves\thanks{sonialuisa.rueda@upm.es, jsendra@euitt.upm.es, rafael.sendra@uah.es } }

%\title{Parametrizing Approximately Curves under Different Criteria\thanks{sonialuisa.rueda@upm.es, jsendra@euitt.upm.es, rafael.sendra@uah.es } }

%\title{On the Approximate Parametrization of Curves\thanks{
%Authors partially supported by *****. }}

\author{
Sonia L. Rueda  \\
Dpto. de Matem\'atica Aplicada \\
      E.T.S. Arquitectura,
 Universidad Polit\'ecnica de Madrid \\
        E-28040 Madrid, Spain %\\sonialuisa.rueda@upm.es
\and Juana  Sendra \\
Dpto. Matem\'atica Aplicada  I.T. Telecomunicaci\'on. UPM, Spain\\
Research Center on Software Technologies\\ and Multimedia Systems for Sustainability (CITSEM) %\\jsendra@euitt.upm.es
\and
 J. Rafael  Sendra\\
Dpto. de Fisica y Matem\'aticas,
        Universidad de Alcal\'a \\
      E-28871 Madrid, Spain  \\
% rafael.sendra@uah.es
}
\date{}          % Enter your date or \today between curly braces
\maketitle

\begin{abstract}
In this paper we introduce the notion of rational Hausdorff divisor, we analyze the dimension and irreducibility of its associated linear system of curves, and we prove that all irreducible real curves belonging to the linear system are rational and are at finite Hausdorff distance among them. As a consequence, we provide  a projective linear subspace where all (irreducible) elements are solutions to the approximate parametrization problem for a given algebraic plane curve. Furthermore,  we identify the linear system  with a plane curve that is shown to be rational and we develop algorithms to parametrize it analyzing its fields of parametrization. Therefore, we present a generic answer to the approximate parametrization problem. In addition, we introduce the notion of Hausdorff curve,  and we prove that every irreducible Hausdorff curve can always be parametrized  with a generic rational parametrization having coefficients depending on as many parameters as the degree of the input curve.
\end{abstract}

\noindent {\bf Keywords}: Hausdorff distance, rational Hausdorff divisor,  Hausdorff curve,  rational curve, approximate parametrization problem.

\section{Introduction}

The research in mathematics has shown that not all (mathematical) problems can be solved algorithmically, as for instance the negative solution of Hilbert's tenth problem given by Matiyasevich or the unsolvability, as a consequence of Abel's Theorem, of the  general equation of degree $n$  if $n>4$. However, even in these cases, the developed techniques to yield to these conclusions have a lot of applications in computational mathematics and in other areas of mathematics as algebra, number theory, etc.

\para

A similar, in some sense, phenomenon appears when computational mathematics mixes with applied mathematics or, more precisely, with   practical applications. In this situation, even though one may be dealing with a problem that can be solved algorithmically, and even though one has good algorithms for approaching the solution, it can happen, and  often it is the case, that the
problem has to be reformulated and analyzed from a different point of view. To be more precise in this claim, let  $\cal E$ be a mathematical entity appearing in the resolution of a practical problem (e.g. $\cal E$ is a real  polynomial) that is known, because of the nature of the treated applied problem, to satisfy  certain property ${\cal P}$ (e.g. being reducible over $\Q$) that implies the existence of certain associated objects ${\cal E}_1,\ldots,{\cal E}_n$ (e.g. the irreducible factors over $\Q$ of the polynomial), and let the goal of the problem be to compute  ${\cal E}_1,\ldots,{\cal E}_n$. However, often in practical applications,  we receive a perturbation ${\cal E}'$ of $\cal E$ instead of $\cal E$, where the property $\cal P$ does not hold anymore neither the associated objects ${\cal E}_i$ exist; for instance, a perturbation of a $\Q$-reducible polynomial will be, in general, $\Q$-irreducible and, therefore, the application of the existing polynomial factorization algorithms will  just not solve our problem. One may  try to recover the original unperturbed entity $\cal E$. Since, this is essentially impossible, a more realistic version of the problem is to determine a new  object ${\cal E}''$ near ${\cal E}'$, and satisfying
${\cal P}$, as well as  computing the associated objects ${\cal E}''_{i}$ to ${\cal E}''$. We call {\sf approximate} to an algorithm solving a problem of the above type; a solution for the illustrating example on polynomial factorization is given in \cite{GaRu}. Some papers  treating this type of problems with the same, or similar, strategy are \cite{BJ}, \cite{checos} \cite{chinos0}, \cite{HMS}, \cite{HS}, \cite{ioannis}, \cite{JutCha}, \cite{PRSS}, \cite{PSS}, \cite{PSS2}, \cite{RSS}, \cite{chinos1}; see also \cite{Robiano}.

\para

Both situations, described above,  far from being a weakness of computational mathematics, are a source of motivation and scientific challenges. Our paper is framed within the second phenomenon, and more precisely when the mathematical entity $\cal E$ is  a real algebraic plane curve, ${\cal P}$ is the property of the curve of being parametrizable by means of  rational functions (i.e. genus 0 property), and the associated objects ${\cal E}_i$ are the rational functions in one parametrization of $\cal E$. We refer to this particular problem as the {\sf approximate parametrization problem}.

\para

But, above, what does it mean  that ${\cal E}''$ is near ${\cal E}'$? Depending on the goal of the problem one uses different distances. When working with sets, as it is our case, one often uses the Hausdorff distance (see  a brief description of this distance at the end of this introduction). Indeed, the Hausdorff distance has proven to be an appropriate tool for measuring the resemblance between two geometric objects, becoming in consequence a widely used tool in fields as
computer aided design, pattern matching and pattern recognition (see for instance \cite{BYLLM}, \cite{kim}).

\para

A main difficulty when working with the Hausdorff distance is that, if not both sets are bounded, the distance between them can be infinity. Most of the papers deal with bounded real algebraic curves or with part of the curves framed into a box, and do not face the unbounded case. We, in our previous papers \cite{PRSS}, \cite{RSS}, do not restrict to the bounded case. However, in those (our) papers we provided algorithms to derive "one" solution for the approximate parametrization problem. Here we develop a theory from where, and  under the assumption that  the given curve has as many different points at infinity as degree, the set of all possible solutions of the approximate parametrization problem is described. From this analysis, one may determine an optimal or almost optimal (under certain given additional criteria) solution of the problem. For instance, one may try to provide a rational parametrization with small height (i.e. integer coefficients with small absolute value), or with a Hausdorff distance smaller than a given tolerance, or satisfying certain additional geometric features as having particular ramification points, type of singularities, tangencies, or topological graphs.

\para

Theorem 6.4, in \cite{RSS}, gives necessary conditions for having finite Hausdorff distance between two real algebraic curves.   Based on this,
we introduce the notion of Hausdorff divisor, we study the dimension and irreducibility of its linear associated system of curves (it is a projective linear subspace), and we prove that all irreducible real curves belonging to the linear system are at finite Hausdorff distance among them (see Theorem \ref{theorem-haus-divisor}). In addition, we introduce the notion of rational Hausdorff divisor, we also study the  dimension and irreducibility, and we prove that all irreducible real curves in the linear
associated system are parametrizable by means of rational functions and are at finite Hausdorff distance (see Theorem \ref{theorem-main-property-genus-divisors}). Therefore, we describe a projective linear subspace where all (irreducible) elements are solutions to the approximate parametrization problem. In a second stage, we identify the linear system of a rational Hausdorff divisor with a plane curve over the algebraic closure of a (in general) transcendental extension of $\C$. This curve is shown to be rational and we provide algorithms to parametrize it over simpler subfields (see Theorem \ref{Theorem-main-param} and its corollaries). This implies that we provide a generic answer to the approximate parametrization problem; that is a rational curve parametrization with coefficients depending polynomially on a finite set of parameters. Finally, we introduce the notion of Hausdorff curve, that essentially asks the curve to have as many different points at infinity as degree. Furthermore, we prove that every irreducible Hausdorff curve can always be parametrized  with a generic parametrization having coefficients depending on as many parameters as degree; so,  with as many degrees of freedom as the degree of the curve (see Theorem \ref{theorem-haus-curve} and its corollary). Therefore, we present an alternative algorithm to the algorithm in \cite{PRSS} that is applicable to a wider family of curves and that provides, not one, but infinitely many solutions to the problem.  We do not present any systematic study of how to proceed to choose an optimal (under a given criterion) solution from the set of infinitely many provided solutions; this is left as future research. Here instead, with an example, we illustrate the potential applicability of the method.

\para

The computations in this paper has been performed with the mathematical software Maple.

\para

\begin{center}
{\sf Hausdorff Distance}
\end{center}

\para

We briefly recall the notion of Hausdorff distance; for
further details we refer to \cite{AB}. In a metric space $(X,\ddd)$,
for $\emptyset \neq B\subset X$ and $a\in X$ we define
$\ddd(a,B)=\inf_{b\in B}\{\ddd(a,b)\}$.
Moreover,  for  $A,B\subset X\setminus \emptyset$ we define
\[ \HH_\ddd(A,B)=\max\{\sup_{a\in A}\{\ddd(a,B)\}, \sup_{b\in B}\{\ddd(b,A)\} \}.  \]
 By convention $\HH_\ddd(\emptyset,\emptyset)=0$ and, for
$\emptyset \neq A\subset X$, $\HH_\ddd(A,\emptyset)=\infty$. The
function $\HH_\ddd$ is called the {\sf Hausdorff distance induced by
$\ddd$}. In  our case, since we will be working in
$(\R^2,\ddd)$,  $\ddd$ being the usual  Euclidean
distance, we simplify the notation writing $\HH(A,B)$.

\section{Hausdorff  Divisors}\label{sec-haus-div}

Throughout this paper we denote by $\proj$ the projective plane over the field $\C$ of complex numbers.  Let us start recalling the notion of divisor. A divisor is, intuitively speaking, a way of describing finite collections of points in $\proj$ with assigned (maybe negative) multiplicities. More precisely, a  {\sf divisor} in $\proj$ is a  formal expression
\[ \sum_{i=1}^{m} s_i P_i \]
where $s_i\in \Z$  and $P_i$ are different points in $\proj$; if $s_i$ are all non-negative integers, the divisor is called {\sf effective}. In this paper, we are only interested in effective divisors; non-effective divisors are used when poles of rational functions need to be analyzed. We define the {\sf degree} of the divisor $D=\sum_{i=1}^{m} s_i P_i $ as the number $$\deg(D):=\sum_{i=1}^{m} s_i.$$

\para

\begin{definition}\label{def-divisor-hau}
We say that a divisor $\sum_{i=1}^{m} s_i P_i$ is a {\sf Hausdorff divisor} if, for all $i\in\{1,\ldots,m\}$, $s_i=1$  and $P_i$ is of the form
$(a:b:0)\in \proj$. \hfill$\bullet$
\end{definition}

\para

Let $n$ be a positive integer, and let $\cc$ be a projective algebraic plane curve of degree $n$. $\cc$ will be defined by a homogeneous polynomial $F(x,y,z)$ with coefficients in $\C$, then we can identify $\cc$ with the projective point given by its coefficients after fixing a term order. For instance, if $n=2$, let us fix e.g. the order $y^2<xy<x^2<yz<xz<z^2$, then the circle $x^2+y^2-z^2=0$ is seen as $(1:0:1:0:0:-1)\in {\mathbb P}^5(\C)$ and any other conic (including double lines) is a point in ${\mathbb P}^5(\C)$. In this situation, the set of all projective curves of degree $n$ is identified with the projective space ${\mathbb P}^{\ell-1}(\C)$, where $\ell=(n+1)(n+2)/2$. Now, a {\sf  linear system of curves of degree $n$} is a linear subspace of ${\mathbb P}^{\ell}(\C)$. Considering parametric equations of the linear system one can see the system as a homogeneous form whose coefficients are those equations. This form is called the {\sf defining polynomial of the linear system}.
For instance, let $n=1$ and $x<y<z$, then the linear subspace,  of degree 1, parametrized as $(\lambda:\mu:0)$ corresponds to the form $\lambda x +\mu y$ that defines the pencil of projective lines passing through the origin $(0:0:1)$. We, indistinctly, will see the linear system as a projective linear subspace or as a homogeneous polynomial.

 \para

Associated with an effective divisor, one can consider a linear system of curves for a positive integer $n$, big enough. More precisely, let $D=\sum_{i=1}^{m} s_i P_i$  be an effective divisor, then we  consider the set of all projective curves of degree $n$ passing through $P_i$ with multiplicity, at least, $s_i$, for $i=1,\ldots, m$. Observe that these requirements are linear conditions on the coefficients of the generic form of degree $n$. Therefore, this set is a linear system of curves. We denote it by ${\cal H}(n,D)$.

\para

\begin{definition}\label{def-hau-linear-system}
Let $D$ be an $n$-degree Hausdorff divisor, the linear system ${\cal H}(n,D)$ is called the {\sf Hausdorff linear system associated to $D$.} \hfill$\bullet$
\end{definition}

\para

A natural question is  the analysis of the dimension of a linear system. In general, if $D$ is an effective divisor it holds (see Theorem 2.59 in \cite{SWP})
 \begin{equation}\label{equation-dim}
 \dim({\cal H}(n,D))\geq \frac{n(n+3)}{2}-\sum_{i=1}^{m} \frac{s_i(s_i+1)}{2}.
\end{equation}
One may also consider the notion of divisor in  general position (see Section 2.4 in \cite{SWP}). From (\ref{equation-dim}), the following result is deduced.

\para

\begin{proposition}\label{prop-dim}
Let $D$ be an $n$-degree Hausdorff divisor. Then $$\dim({\cal H}(n,D)\geq \frac{n(n+1)}{2}.$$
\end{proposition}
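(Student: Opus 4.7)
The plan is to apply the general dimension bound (\ref{equation-dim}) directly to the specialized structure of a Hausdorff divisor. First I would unpack the hypothesis: since $D=\sum_{i=1}^{m} s_i P_i$ is an $n$-degree Hausdorff divisor, Definition \ref{def-divisor-hau} forces each multiplicity $s_i$ to equal $1$; consequently, $n=\deg(D)=\sum_{i=1}^{m} s_i=m$, so the number of base points equals the degree $n$.

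Next I would substitute these values into the right-hand side of (\ref{equation-dim}). Each summand $\frac{s_i(s_i+1)}{2}$ becomes $\frac{1\cdot 2}{2}=1$, so the sum collapses to $m=n$. Hence
\[
\dim({\cal H}(n,D))\;\geq\;\frac{n(n+3)}{2}-n\;=\;\frac{n^2+3n-2n}{2}\;=\;\frac{n(n+1)}{2},
\]
which is the desired bound.

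There is no real obstacle here: the entire content of the proposition is an arithmetic specialization of the standard inequality (\ref{equation-dim}) together with the observation that a Hausdorff divisor is automatically simple (all $s_i=1$) and that its total number of points equals its degree. The only point worth double-checking is that (\ref{equation-dim}) is indeed applicable in our setting; but the $n$ in $\cal H(n,D)$ is, by Definition \ref{def-hau-linear-system}, chosen to match $\deg(D)$, so the configuration of $n$ simple base points in $\proj$ fits the hypotheses of Theorem 2.59 in \cite{SWP} verbatim. Thus the proof is essentially a one-line computation.
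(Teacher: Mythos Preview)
Your proof is correct and matches the paper's approach exactly: the paper simply states that the proposition is deduced from inequality~(\ref{equation-dim}), and your argument spells out precisely that specialization ($s_i=1$, $m=n$) and the resulting arithmetic $\frac{n(n+3)}{2}-n=\frac{n(n+1)}{2}$.
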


\para

Let $D$ be an  effective divisor such that ${\mathcal H}(n,D)\neq \emptyset$, and let $H(\Lambda,x,y,z)$ be its defining polynomial, where
$\Lambda$ is a tuple of parameters. For each specialization  $\Lambda_0$ of $\Lambda$, taking values in $\C$, we get a projective curve, namely the curve defined by  $H(\Lambda_0,x,y,z)$. Alternatively, we can see ${\cal H}(n,D)$ as a projective plane curve over  the algebraic closure of $\C(\Lambda)$ defined by $H(\Lambda,x,y,z)$. This, motivates the following definition.

\para

\begin{definition}\label{def-curve-asoc-linear-sytema}
Let $D$ be an effective divisor, let $\emptyset \neq \overline{\cal H}\subseteq {\cal H}(n,D)$, and let $\overline{H}(\overline{\Lambda},x,y,z)$ be the defining polynomial of $\overline{\cal H}$. The projective plane curve defined by $\overline{H}(\overline{\Lambda},x,y,z)$, over the algebraic closure of $\C(\overline{\Lambda})$, is called the {\sf projective algebraic curve associated to $\overline{\cal H}$} and we denote it by $\curva(\overline{\cal H})$; in general we will identify $\overline{\cal H}$ and $\curva(\overline{\cal H})$.\hfill$\bullet$
\end{definition}

\para

\begin{example}\label{example-curve}
We illustrate the previous concept in this example. Let $D=(0:0:1).$
The defining polynomial of ${\cal H}(1,D)$ is (here $\Lambda=(\lambda_{1},\lambda_{2})$)
\[ H(\Lambda,x,y,z)=\lambda_1 x+\lambda_2 y \]
Therefore, ${\cal H}(1,D)$ consists in all lines in $\mathbb{P}^2(\C)$ passing through $(0:0:1)$. However, $\curva({\cal H}(1,D)$ is a particular line in ${\mathbb P}^{2}(\overline{\C(\Lambda)})$ namely the line $y=-\lambda_1/\lambda_2 x$, where $\overline{\C(\Lambda)}$ is the algebraic closure of $\C(\Lambda)$. \hfill $\Box$
\end{example}

%\para

%\begin{example}\label{example-curve}
%We illustrate the previous concept in this example. Let $$D=(\ii:1:0)+(-\ii:1:0)+(0:1:1)+(0:-1:1).$$
%The defining polynomial of ${\cal H}(2,D)$ is (here $\Lambda=(\lambda_{1},\lambda_{2})$)
%\[ H(\Lambda,x,y,z)=-\lambda_{{1}}{z}^{2}+\lambda_{{1}}{y}^{2}+\lambda_{{2}}xz+\lambda_{{1}}{x}^{2} \]
%that can be expressed as
%\[  \frac{1}{\lambda_{1}} \left(\left( x+1/2\,{\frac {\lambda_{{2}}z}{\lambda_{{1}}}} \right) ^{2}+{y
%}^{2}-1/4\,{\frac { \left( {\lambda_{{2}}}^{2}+4\,{\lambda_{{1}}}^{2}
% \right) {z}^{2}}{{\lambda_{{1}}}^{2}}}
%\right). \]
%Therefore, ${\cal H}(2,D)$ consists in the product of lines $xz=0$, when $\lambda_1\neq 0$, and all the   circles   centered at $(-\lambda_2/(2\lambda_1):0:1)$ and radius %$\sqrt{\lambda_{2}^{2}+4\lambda_{1}^2}/(2\lambda_1)$, when $\lambda_2\in \C$ and $\lambda_1\in \C\setminus \{0\}$. However, $\curva({\cal H}(2,D)$ is the circle in %$\overline{{\mathbb P}^{2}(\Lambda)}$ centered at $(-\lambda_2/(2\lambda_1):0:1)$ and radius $\sqrt{\lambda_{2}^{2}+4\lambda_{1}^2}/(2\lambda_1)$. \hfill $\Box$
%\end{example}

\para

Our main goal will be to parametrize the projective algebraic curve associated to a linear system. This implies that the curve has to be irreducible, and hence the next notion appears naturally.

\para

\begin{definition}\label{Def-irred}
Let $D$ be an  effective divisor such that ${\mathcal H}(n,D)\neq \emptyset$. We say that $D$ is {\sf irreducible} if  $\curva({\mathcal H}(n,D)) $ is irreducible; that is, if the defining polynomial  $H(\Lambda,x,y,z)$  of the linear system ${\mathcal H}(n,D)$ is  irreducible over the algebraic closure of $\C(\Lambda)$. \hfill$\bullet$
\end{definition}

\para

\begin{example}\label{example-irreducible}
  Let $D=2(1:0:0).$ The defining polynomial of ${\cal H}(2,D)$ is (here $\Lambda=(\lambda_{1},\lambda_{2},\lambda_3)$)
\[ H(\Lambda,x,y,z)=\lambda_{{1}}{z}^{2}+\lambda_{{2}}yz+\lambda_{{3}}{y}^{2}. \]
This polynomial is irreducible over $\C$. However, over  the algebraic closure $\overline{\C(\Lambda)}$    of $\C(\Lambda)$, $H$ factors as
\[  H(\Lambda,x,y,z)= \lambda_{{1}} \left( z+\frac{1}{2}\,{\frac {\lambda_{{2}}y}{\lambda_{{1}}}}
 \right) ^{2}+ \left( \lambda_{{3}}-\frac{1}{4}\,{\frac {{\lambda_{{2}}}^{2}}{
\lambda_{{1}}}} \right) {y}^{2}
=\] \[ =\left( \sqrt {\lambda_{{1}}} \left( z+\frac{1}{2}\,{\frac {\lambda_{{2}}y}{
\lambda_{{1}}}} \right) +\frac{1}{2}\,\ii\sqrt {4\,\lambda_{{3}}-{\frac {{
\lambda_{{2}}}^{2}}{\lambda_{{1}}}}}y \right)  \cdot \left( \sqrt {\lambda_{
{1}}} \left( z+\frac{1}{2}\,{\frac {\lambda_{{2}}y}{\lambda_{{1}}}} \right)
-\frac{1}{2}\,\ii\sqrt {4\,\lambda_{{3}}-{\frac {{\lambda_{{2}}}^{2}}{\lambda_{{1}
}}}}y \right).\]
Thus $D$ is reducible or, equivalently $\curva({\cal H}(2,D))$ is reducible; indeed, it is a pair of lines.
Note that $D$ is not Hausdorff. \hfill $\Box$
\end{example}

\para

In the next theorem, we study the irreducibility of $\curva({\mathcal H}(n,D))$, when $D$ is a Hausdorff divisor; observe that a Hausdorff linear system is never empty  (see Prop. \ref{prop-dim}), and hence $\curva({\mathcal H}(n,D))$ always exists. We start with the following lemma.

\para

\begin{lemma}\label{lemma-irred}
Let $D$ be an $n$-degree Hausdorff divisor. The defining polynomial of ${\cal H}(n,D)$ is irreducible over $\C$.
\end{lemma}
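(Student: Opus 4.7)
The plan is to exploit the fact that the defining polynomial $H(\Lambda,x,y,z)$ of $\mathcal{H}(n,D)$ is homogeneous of degree $n$ in $(x,y,z)$ and of degree exactly $1$ in $\Lambda$. This last fact is a direct consequence of the construction: the linear system is a projective linear subspace, so its parametric equations can be taken linear in $\Lambda$. Therefore any non-trivial factorization $H=H_1\,H_2$ in $\mathbb{C}[\Lambda,x,y,z]$ would distribute the $\Lambda$-degree $1$ across $H_1$ and $H_2$, and one of the factors, say $H_2$, must lie in $\mathbb{C}[x,y,z]$.

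If such a $H_2\in\mathbb{C}[x,y,z]$ divides $H$, then $H_2$ divides the defining polynomial of every curve in $\mathcal{H}(n,D)$, because $H_2$ divides every specialization $H(\Lambda_0,x,y,z)$. The strategy now is to exhibit two explicit members of $\mathcal{H}(n,D)$ whose greatest common divisor in $\mathbb{C}[x,y,z]$ is a constant, forcing $H_2$ to be a constant.

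For the first member, note that the line at infinity $z=0$ contains every point $P_i=(a_i:b_i:0)$. Hence the curve with equation $z^n=0$ passes through each $P_i$ with multiplicity $n\ge 1$ and thus belongs to $\mathcal{H}(n,D)$. This forces $H_2 = c\,z^{k}$ for some $c\in\mathbb{C}^{*}$ and $0\le k\le n$. For the second member, for each $i=1,\ldots,n$ set
\[
L_i(x,y) = b_i\,x - a_i\,y,
\]
which is a nonzero linear form (because $(a_i,b_i)\neq(0,0)$) vanishing at $P_i$, since $b_i\,a_i-a_i\,b_i=0$. The product $G(x,y) = \prod_{i=1}^{n} L_i(x,y)$ is then a homogeneous polynomial of degree $n$ in $x,y$ alone, vanishing at every $P_i$, and hence $G\in\mathcal{H}(n,D)$. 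Since $G$ has no factor of $z$, divisibility of $G$ by $c\,z^{k}$ forces $k=0$, and so $H_2$ must be constant.

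There is no serious obstacle in this argument; the one point that merits care is ensuring that $H$ really has $\Lambda$-degree $1$ (so that any factorization has a factor free of $\Lambda$), which follows from the linearity of the parametrization of the projective subspace $\mathcal{H}(n,D)\subseteq\mathbb{P}^{\ell-1}(\mathbb{C})$. The rest is the explicit construction of the two probe curves $z^n$ and $\prod_i L_i$, whose incompatibility as common divisors pins down $H_2$ up to a constant and therefore establishes irreducibility of $H$ over $\mathbb{C}$.
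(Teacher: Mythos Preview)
Your argument is correct, but it differs from the paper's. The paper observes that if $H$ had a nontrivial factorization over $\C$ with both factors of positive $(x,y,z)$-degree, then every specialization would define a reducible curve; it then exhibits a single \emph{irreducible} member of $\mathcal{H}(n,D)$, namely
\[
F(x,y,z)=z\,(bx-ay)^{n-1}-\prod_{i=1}^{n}(b_i x-a_i y),
\]
with $(a:b:0)$ distinct from all $P_i$, to reach a contradiction. You instead use the linearity of $H$ in $\Lambda$ to force any hypothetical factor free of $\Lambda$ to divide every member of the system, and then exhibit two \emph{coprime} members, $z^n$ and $\prod_i(b_ix-a_iy)$, to rule this out. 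Your route is slightly more elementary in that checking $\gcd(z^n,\prod_i L_i)=1$ is immediate, whereas the paper's irreducibility of $F$ requires a short (unstated) argument. On the other hand, the paper's explicit irreducible curve is not wasted effort: essentially the same construction reappears in the proof of Theorem~\ref{theorem-haus-curve}, so the paper's approach amortizes across two results.
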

\begin{proof} Let $H(\Lambda,x,y,z)$ be the defining polynomial of ${\cal H}(n,D)$ and let $\F$ be the algebraic closure of $\C(\Lambda)$. If $H(\Lambda,x,y,z)$ factors over $\C$, with factors depending not  only on $\Lambda$, then all curves in the linear system are reducible. So, to prove the statement, we find an specific irreducible projective curve in ${\cal H}(n,D)$. Let us assume that $D=\sum_{i=1}^{n} (a_i:b_i:0)$. Let $(a:b:0)$ be different to all points in $D$. We consider the projective curve $\cc$ defined by
\[ F(x,y,z)= z (bx-ay)^{n-1}-\prod_{i=1}^{n} (b_i x-a_i y). \]
Since $(a:b:0)$ is different to $(a_i:b_i:0)$, $F$ is irreducible and clearly $\cc\in {\mathcal H}(n,D)$.
\end{proof}

\para

\begin{theorem}\label{theorem-irred}
Let $D$ be an $n$-degree Hausdorff divisor. Then, $D$ is irreducible.
\end{theorem}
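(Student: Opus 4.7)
My strategy is to combine the structural constraint imposed by the Hausdorff condition with a dimension count. Setting $D=\sum_{i=1}^{n}(a_i:b_i:0)$ and $L(x,y)=\prod_{i=1}^{n}(b_i x - a_i y)$, the first observation is that any homogeneous form of degree $n$ vanishing at the $n$ distinct points $(a_i:b_i:0)$ restricts on $\{z=0\}$ to a binary form of degree $n$ sharing the $n$ distinct roots of $L$, and is therefore a scalar multiple of $L$. Hence the defining polynomial admits the canonical decomposition
\[
H(\Lambda,x,y,z) \;=\; \lambda_0\,L(x,y) + z\,G(\Lambda',x,y,z),
\]
where $G$ is a completely generic homogeneous polynomial of degree $n-1$ in $x,y,z$ whose $n(n+1)/2$ coefficients $\Lambda'$ are independent parameters, and $\Lambda=(\lambda_0,\Lambda')$, in agreement with Proposition~\ref{prop-dim}.

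I would then argue by contradiction. Suppose $H=A\cdot B$ in $\overline{\C(\Lambda)}[x,y,z]$ with $A,B$ non-constant homogeneous of degrees $d$ and $n-d$, $1\le d \le n-1$. Setting $z=0$ gives $A(x,y,0)\,B(x,y,0)=\lambda_0\,L(x,y)$, and neither factor can vanish identically (else $z\mid H$, forcing $\lambda_0 = 0$). Because $L$ splits into $n$ distinct linear forms over $\C \subset \overline{\C(\Lambda)}$, unique factorization in $\overline{\C(\Lambda)}[x,y]$ forces
\[
A(x,y,0) = \alpha\prod_{i\in I}(b_i x - a_i y), \qquad B(x,y,0) = \beta\prod_{i\in J}(b_i x - a_i y),
\]
for some partition $I\sqcup J=\{1,\dots,n\}$ with $|I|=d$ and scalars $\alpha,\beta \in \overline{\C(\Lambda)}$ with $\alpha\beta=\lambda_0$.

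The proof is then closed by a dimension count. Writing $A=\sum_{j=0}^{d}z^{j}A_{j}(x,y)$ with $A_{j}$ a binary form of degree $d-j$, the constraint above fixes $A_{0}$ up to the single scalar $\alpha$ while leaving $A_{1},\dots,A_{d}$ free; thus $A$ varies in a space of dimension $1 + d(d+1)/2$, and similarly $B$ in dimension $1 + (n-d)(n-d+1)/2$. Modulo the scalar gauge $(A,B)\mapsto(\gamma A,\gamma^{-1}B)$, the pair $(A,B)$ varies in a space of dimension $1 + d(d+1)/2 + (n-d)(n-d+1)/2$, whereas the $H$-parameter space has dimension $1 + n(n+1)/2$, and
\[
\Bigl(1+\tfrac{n(n+1)}{2}\Bigr) - \Bigl(1+\tfrac{d(d+1)}{2}+\tfrac{(n-d)(n-d+1)}{2}\Bigr) \;=\; d(n-d) \;>\;0
\]
for $1\le d\le n-1$. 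Taking the (finite) union over all partitions and all admissible $d$, the locus of specializations $H(\Lambda_0)$ that are reducible over $\C$ is contained in a proper Zariski-closed subset of the parameter space, so a Zariski-dense open set of specializations yields an irreducible $H(\Lambda_0)$.

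The main obstacle I anticipate is the final translation from ``generic specialization is irreducible over $\C$'' to ``$H$ is irreducible over $\overline{\C(\Lambda)}$''. The standard device is: a hypothetical factorization $H=A\cdot B$ over $\overline{\C(\Lambda)}$ has coefficients in a finite extension $K$ of $\C(\Lambda)$; clearing denominators and spreading out produces a factorization over a generically finite cover of an open subset of the parameter space, and specialization at a generic fiber yields a nontrivial factorization of $H(\Lambda_0)$ over $\C$ for a dense set of $\Lambda_0$. The dimension count above rules this out, and Lemma~\ref{lemma-irred} provides an explicit irreducible specialization confirming that the reducibility locus is indeed proper. By Definition~\ref{Def-irred}, $D$ is irreducible.
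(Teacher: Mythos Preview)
Your argument is correct, but it follows a genuinely different path from the paper's. The paper normalizes so that $(0:1:0)\notin D$, dehomogenizes to $h(y,z)=H(1,y,z)$, and observes that the Hausdorff condition makes $h(y,0)$ squarefree; it then invokes Theorem~5.5.2 in \cite{Winkler} (an absolute--irreducibility criterion for polynomials monic in one variable with squarefree constant--term specialization) to conclude that irreducibility of $h$ over $\overline{\C(\Lambda)}$ is equivalent to irreducibility over $\C$, and the latter is supplied by Lemma~\ref{lemma-irred}. Your route instead exhibits the decomposition $H=\lambda_0 L+zG$ with $G$ fully generic of degree $n-1$, bounds the $\C$--reducibility locus in the $\Lambda$--space by a parameter count (the deficit $d(n-d)$ for each factor degree $d$ and each partition of the $n$ simple roots of $L$), and closes with a spreading--out argument translating a hypothetical factorization over $\overline{\C(\Lambda)}$ into reducibility of $H(\Lambda_0)$ on a Zariski--dense set of specializations. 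Two remarks: first, your dimension count is not strictly needed, since Lemma~\ref{lemma-irred} (which you already cite) plus the closedness of the reducibility locus in the space of degree--$n$ forms already shows that locus is proper; second, your spreading--out step is exactly where the paper's citation of Winkler's theorem does the work more cheaply, exploiting the squarefreeness of $h(y,0)$ rather than generic--fiber reasoning. The trade--off is that your argument is self--contained and makes the role of the genericity of $G$ explicit, while the paper's is shorter but leans on an external irreducibility criterion.
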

\begin{proof} Let $H(\Lambda,x,y,z)$, ${\cal H}(n,D)$ and $\F$ as in the proof of Lemma \ref{lemma-irred}. We may assume w.l.o.g. that $H$ is monic w.r.t. $y$ (this is equivalent to $(0:1:0)\not\in D$): indeed, if it is not the case, we can always perform a projective change of coordinates over $\C$, such that $(0:1:0)\not\in D$ and $D$ stays  Hausdorff; then  the irreducibility of $H$ over $\F$ is preserved. Since $D$ is Hausdorff, by Lemma \ref{lemma-irred},  $x$ does not divide $H$. Therefore, $H$ is irreducible over $\F$ iff $h(y,z)=H(1,y,z)$ is irreducible over $\F$.  $h(y,z)$ is monic in $y$. Moreover, since $D$ is Hausdorff, $h(y,0)$ is square-free. Therefore, using Theorem 5.5.2 in \cite{Winkler}, we have that $h$ is irreducible over $\F$ iff $h$ is irreducible over $\C$. Now, the result follows from Lemma \ref{lemma-irred}.
\end{proof}

\para

\begin{corollary}\label{corollary-irred}
Let $D$ be an $n$-degree Hausdorff divisor, and let $\overline{H}(\overline{\Lambda},x,y,z)$ be the defining polynomial of a non-empty  linear subsystem $\overline{\cal H}\subseteq{\cal H}(n,D)$. Then, $\overline{H}$ is irreducible over the algebraic closure of $\C(\overline{\Lambda})$ if and only if $\overline{H}$ is irreducible over $\C$; that is, $\curva(\overline{\cal H})$ is irreducible if and only if $\overline{H}$ is irreducible over $\C$.
\end{corollary}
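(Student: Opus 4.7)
The plan is to reduce this corollary to Theorem \ref{theorem-irred} by mimicking its proof with $\overline{H}$ in place of $H$, replacing the appeal to Lemma \ref{lemma-irred} by the hypothesis that $\overline{H}$ is irreducible over $\C$. The ``only if'' direction is immediate: any factorization of $\overline{H}$ in $\C[\overline{\Lambda}][x,y,z]$ is a fortiori a factorization in $\F[x,y,z]$, where $\F=\overline{\C(\overline{\Lambda})}$. So the content lies in the converse.

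Assuming that $\overline{H}$ is irreducible over $\C$ and $n\geq 2$ (the case $n=1$ is trivial), first I would apply a projective change of coordinates over $\C$ that preserves the line $z=0$, hence keeps $D$ Hausdorff, and that is chosen so that the coefficient of $y^n$ in the new $\overline{H}$ is a nonzero element of $\C[\overline{\Lambda}]$. Such a change exists because $\C$-irreducibility and $n\geq 2$ force $z\nmid\overline{H}$, so $\overline{H}(x,y,0)\not\equiv 0$; then a shear $(x,y,z)\mapsto(x+ay,y,z)$ with $a\in\C$ chosen so that $\overline{H}(a,1,0;\overline{\Lambda})\not\equiv 0$ does the job. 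After dividing by this leading coefficient in $\C(\overline{\Lambda})$, $\overline{H}$ is monic in $y$ of degree $n$, and $\C$-irreducibility likewise rules out $x\mid\overline{H}$.

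Next I would dehomogenize by setting $\overline{h}(y,z)=\overline{H}(1,y,z)$. The standard homogenization/dehomogenization correspondence for forms that are monic in $y$ and not divisible by $x$ shows, over any field $K$ containing $\C$, that $\overline{H}$ is irreducible over $K$ iff $\overline{h}$ is. Moreover, $\overline{h}(y,0)$ is square-free: the transformed points of $D$ can be normalised as $n$ distinct points $(1:b_i':0)$ (since $(0:1:0)\not\in D$ after the change), every curve of $\overline{\cal H}$ passes through them, and since $z\nmid\overline{H}$ Bezout forces the total intersection with $z=0$ to be exactly $n$, so each $b_i'$ is a simple root and these exhaust the roots of $\overline{h}(y,0)$. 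Applying Theorem 5.5.2 of \cite{Winkler} to $\overline{h}$, exactly as in the proof of Theorem \ref{theorem-irred}, then yields $\overline{h}$ irreducible over $\F$ iff irreducible over $\C$, and hence the same for $\overline{H}$.

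The main obstacle I anticipate is not the Winkler step itself but the preliminary adjustment of coordinates: one has to verify simultaneously that the change can be made over $\C$ while preserving the Hausdorff structure, that it forces $(0:1:0)$ to lie neither in $D$ nor in the base locus of $\overline{\cal H}$ so that the coefficient of $y^n$ becomes genuinely nonzero in $\C[\overline{\Lambda}]$, and that the square-freeness of $\overline{h}(y,0)$—which in the original theorem was a property of the full system ${\cal H}(n,D)$—persists for a subsystem. All three points pivot precisely on the hypothesis of $\C$-irreducibility of $\overline{H}$, which is exactly where the proof of Theorem \ref{theorem-irred} invoked Lemma \ref{lemma-irred}.
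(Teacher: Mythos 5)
Your proposal is correct and follows exactly the route the paper takes: the paper's proof of this corollary is precisely "the forward direction is trivial; for the converse repeat the proof of Theorem \ref{theorem-irred}, replacing the appeal to Lemma \ref{lemma-irred} by the hypothesis that $\overline{H}$ is irreducible over $\C$." Your write-up simply makes explicit the details (choice of shear over $\C$, nonvanishing of the $y^n$-coefficient, square-freeness of $\overline{h}(y,0)$ for the subsystem) that the paper leaves implicit, and these checks are carried out correctly.
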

\begin{proof}
Let left-right implication is trivial. The right-left implication follows as the proof of Theorem \ref{theorem-irred} but using the irreducibility of $\overline{H}$ over $\C$, instead of Lemma \ref{lemma-irred}.
\end{proof}

\para

The next theorem states the main result on Hausdorff divisors. For this purpose, if $\cc$ is the projective algebraic curve defined by the form $F(x,y,z)$, and it is different to the line at infinity $z=0$, we denote by $\cc_a$ the affine algebraic curve defined by $F(x,y,1)$. Furthermore, for an affine algebraic curve $\cc_{a}$ we denote by $\cc_{a}^{\infty}$ the points at infinity of $\cc_a$. We recall that an affine curve is {\sf real} if it contains infinitely many real points.

\para

\begin{theorem}\label{theorem-haus-divisor}
Let $D$ be an $n$-degree Hausdorff divisor.  For every two real irreducible curves  $\cc_1,\cc_2\in {\cal H}(n,D)$, such that $\deg(\cc_{i,a})=n$, it holds that
$$\HH(\cc_{1,a}\cap \R^2, \cc_{2,a}\cap \R^2)<\infty.$$
\end{theorem}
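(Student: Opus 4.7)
The plan is to reduce the finiteness of $\HH(\cc_{1,a}\cap \R^2, \cc_{2,a}\cap \R^2)$ to the fact that $\cc_1$ and $\cc_2$ share exactly the same set of points at infinity, each of them simple. Writing $D=\sum_{i=1}^n P_i$ with $P_i=(a_i:b_i:0)$ pairwise distinct, the hypothesis $\cc_1,\cc_2\in{\cal H}(n,D)$ forces each $\cc_j$ to pass through every $P_i$. Moreover, since $\deg(\cc_{j,a})=n$, the line $z=0$ is not a component of $\cc_j$, and B\'ezout applied to $\cc_j$ and $z=0$ yields exactly $n$ intersection points counted with multiplicity. Hence $\cc_{j,a}^\infty=\{P_1,\ldots,P_n\}$ and each $P_i$ appears in $\cc_j$ with intersection multiplicity one; in particular each $P_i$ is a simple point of $\cc_j$ whose tangent there is transversal to the line at infinity.

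Next I would split the argument into a bounded and an unbounded regime. Fix a sufficiently large closed disc $B\subset\R^2$; then $\cc_{j,a}\cap\R^2\cap B$ is compact, so the Hausdorff distance between the two restrictions to $B$ is finite. Outside $B$, each $\cc_{j,a}\cap\R^2$ decomposes into finitely many unbounded real branches, and every such branch necessarily escapes toward a \emph{real} point at infinity of $\cc_j$. The complex $P_i$'s come in conjugate pairs that contribute no real escape direction, so only the real points in $\{P_1,\ldots,P_n\}$ play a role. At each real $P_i$, both $\cc_1$ and $\cc_2$ have exactly one simple real branch with asymptotic direction $(a_i,b_i)$; by the simplicity and the transversality to $z=0$ this branch admits a linear asymptote. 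Two real branches with parallel asymptotes remain at uniformly bounded Euclidean distance (their separation tending to the constant distance between the asymptotic lines), so the Hausdorff distance between the corresponding unbounded pieces of $\cc_{1,a}\cap\R^2$ and $\cc_{2,a}\cap\R^2$ is finite. Pairing branches across the finitely many real $P_i$ and adding the bounded contribution from $B$ yields the claimed bound.

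The main obstacle is the rigorous asymptotic analysis at each simple real point at infinity: one must justify that a smooth branch of $\cc_j$ at $P_i$ really admits a linear asymptote with direction $(a_i,b_i)$, and that two such asymptotes being parallel is enough to bound the Hausdorff distance between the corresponding real branches. This is essentially the content of Theorem 6.4 of \cite{RSS}, cited in the introduction, which characterizes finite Hausdorff distance between two real algebraic curves in terms of matching points at infinity together with the corresponding asymptotic condition. In the Hausdorff divisor setting that condition is automatic because the points of $D$ are simple, are shared by both curves, and have the form $(a:b:0)$ which guarantees transversality to $z=0$ and hence the needed linear asymptote.
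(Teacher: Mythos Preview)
Your proposal is correct and follows essentially the same route as the paper: both arguments verify that $\cc_{1,a}^\infty=\cc_{2,a}^\infty=\{P_1,\ldots,P_n\}$ with $\card(\cc_{j,a}^\infty)=\deg(\cc_{j,a})=n$, and then invoke Theorem~6.4 of \cite{RSS}. The paper's proof is just the two-line version of your first and third paragraphs; your middle paragraph is an extra sketch of the asymptotic mechanism underlying that cited theorem, which the paper itself does not reproduce.
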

\begin{proof} Let $D=\sum_{i=1}^{n} P_i$. Since $\cc_1,\cc_2\in {\cal H}(n,D)$ then  $\cc_{1,a}^{\infty}=\cc_{2,a}^{\infty}=\{P_1,\ldots,P_n\}$. Moreover, $\card(\cc_{1,a}^{\infty})=\card(\cc_{2,a}^{\infty})=\deg(\cc_{1,a})=\deg(\cc_{2,a})$. Now, the result follows from Theorem 6.4. in \cite{RSS}
\end{proof}

\para

In the following we find necessary conditions on the Hausdorff divisor  $D=\sum_{i=1}^{n} P_i$ such that ${\cal H}(n,D)$ contains curves  verifying the hypotheses  of Theorem \ref{theorem-haus-divisor}. For this purpose, we will use the concept of family of conjugate points that can be introduced as follows; see Def. 3.15 in \cite{SWP} for further details. Let $\K$ be a subfield of $\C$, e.g. $\K=\R$, then a finite family of points is ${\mathbb P}^2(\C)$ is {\sf $\K$-conjugate} if it can be expressed as
\[ \{(p_1(t):p_2(t):p_3(t))\,|\, m(t)=0\} \]
where $p_i,m\in \K[t]$ and $gcd(p_1,p_2,p_3)=1$; for instance, the points in ${\cal F}:=\{(\pm \ii:1:0)\}$ are $\Q$-conjugated since ${\cal F}=\{(t:1:0)\,|\,t^2+1=0\}$.

\para

Let $\cc \in  {\cal H}(n,D)$ be  such that $\deg(\cc_{a})=n$, and $\cc_{a}$ is real and irreducible. Let $F(x,y,z)$ be the defining polynomial of $\cc$. Then, $\{P_1,\ldots,P_n\}$ is the family of conjugate points
$\{(t:h:0)| F(t,h,0)=0\}$. Moreover, since $\cc$ is real, then $F$ is a real polynomial (see Lemma 7.2 in \cite{SWP}), and thus the family is $\R$-conjugated. This motivates the following definition.

\para

\begin{definition}\label{def-real-haus-divisor}
Let $\K$ be a subfield of $\C$. We say that a Hausdorff divisor $D=\sum_{i=1}^{m} P_i$ is {\sf $\K$-definable} if $\{P_1,\ldots,P_m\}$ is a $\K$-conjugate family of points. \hfill$\bullet$
\end{definition}

\para

In the next examples, we illustrate the notion of $\R$-definability as well as Theorem \ref{theorem-haus-divisor}.

\para

\begin{example}\label{ex-k-def-haus}
We consider the Hausdorff divisor ($\ii$ is the imaginary unit)
\[ D=\left(\frac{1}{2} \sqrt {2}+\frac{1}{2} \ii\sqrt {2}+1:\ii:0\right)+\left(-\frac{1}{2} \sqrt {2}+\frac{1}{2} \ii\sqrt {
2}+1:-\ii:0\right)+\] \[ +\left(-\frac{1}{2} \sqrt {2}-\frac{1}{2} \ii\sqrt {2}+1:\ii:0\right)+\left(\frac{1}{2} \sqrt {2}-\frac{1}{2}
 \ii\sqrt {2}+1:-\ii:0\right).
 \]
$D$ can be expressed as $D= \sum  (\alpha+1:\alpha^2:0)$, where $\alpha^4+1=0$.
Therefore, $D$ is $\R$-definable. \hfill $\Box$
\end{example}

\para

\begin{example}\label{example-cuadrics}
We consider the $4$-degree Hausdorff divisor $$D=(1:1:0)+(-1:1:0)+(0:1:0)+(1:0:0).$$
The defining polynomial of ${\cal H}(4,D)$ is

\para

\noindent $H(x,y,z)=\lambda_{11}{z}^{4}+\lambda_{10}y{z}^{3}+\lambda_{9}{y}^{2}{z}^{2}+\lambda_{8}{y}^{3}z+
\lambda_{7}x{z}^{3}+\lambda_{6}xy{z}^{2}+\lambda_{5}x{y}^{2}z-\lambda_{1}x{y}^{3}+\lambda_{4}{x}^{2}{z}^{2}+\lambda_{3}{x}^{2}yz+\lambda_{2}{x}^{3}
z+\lambda_{1}{x}^{3}y.$

\para

Observe that the number of parameters $\lambda_i$ is 11, and hence $\dim({\cal H}(4,D))=10$; compare to Prop. \ref{prop-dim}. In Fig. \ref{figure-1} one may see 8 different curves in the linear system. Observe that all of them have asymptotes in the direction of the vectors $(1,1),(-1,1),(1,0),(0,1)$. \hfill $\Box$
\begin{figure}
  \centering
\includegraphics[width=6cm]{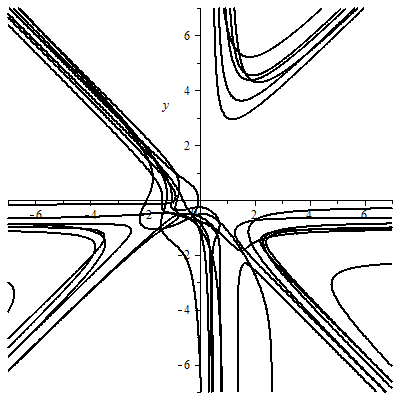}
\caption{Some curves in ${\cal H}(4,D)$ in Example \ref{example-cuadrics}}
\label{figure-1}
\end{figure}
\end{example}

\para

\begin{center}
{\sf Conics: 2-degree $\R$-definable Hausdorff divisors}
\end{center}

\para

 In this subsection we analyze the 2-degree $\R$-definable Hausdorff divisors. We distinguish two cases: first the two points of the divisor are real, and second the two points are complex in which case they have to be conjugated because of the $\R$-definability.

\vspace*{1 mm}

\noindent {\sf [Real points: the non-compact case]} We consider  2-degree Hausdorff divisors with real points. We distinguish several cases. We start with $D=(1:0:0)+(0:1:0)$.  The defining polynomial of ${\mathcal H}(2,D)$ is
   $
    H=a_{{0,0}}{z}^{2}+a_{{0,1}}yz+a_{{1,0}}xz+a_{{1,1}}xy.
$
We may assume w.l.o.g. that $a_{1,1}\neq 0$, since otherwise for all $\cc$, in the linear system, $\deg(\cc_a)=1$. Then, $H(x,y,1)$ can be expressed as
\[H(x,y,1)=\left( x+{\frac {a_{{0,1}}}{a_{{1,1}}}} \right)  \left( y+{\frac {a_{
{1,0}}}{a_{{1,1}}}} \right) +{\frac {a_{{0,0}}}{a_{{1,1}}}}-{\frac {a_
{{0,1}}a_{{1,0}}}{{a_{{1,1}}}^{2}}}.
\]
Now, observe that all real irreducible affine curves derived from the system are hyperbolas with parallel asymptotes, indeed with direction vectors $(1,0)$ and $(0,1)$ (compare to Theorem 3 in  \cite{Co} or Lemma 6.1 in \cite{PRSS}), and hence Theorem \ref{theorem-haus-divisor} holds.
\\
Second, we take $D=(a:1:0)+(b:1:0)$, with  $a,b\in \R$, $a\neq b$. The defining polynomial of ${\mathcal H}(2,D)$ is
$H=a_{{0,0}}{z}^{2}+a_{{0,1}}yz+ba_{{2,0}}a{y}^{2}+a_{{1,0}}xz-a_{{2,0}}a
xy-xya_{{2,0}}b+a_{{2,0}}{x}^{2}.$
We may assume w.l.o.g. that $a_{2,0}\neq 0$, since otherwise for all $\cc$, in the linear system, $\deg(\cc_a)=1$. Then, $H(x,y,1)$ can be expressed as \[ \frac{1}{4} \left( 2x+{\frac {a_{{1,0}}}{a_{{2,0}}}}-ay-by \right) ^{2}-\frac{1}{4} \left(  \left( a-b \right) y-{\frac {\Delta}{ \left( a-b \right) {
a_{{2,0}}}^{2}}} \right) ^{2}+
\]
\[+ \frac{1}{4}{\frac {4a_{{0,0}}a_{{2,0}}-{a_{{
1,0}}}^{2}}{{a_{{2,0}}}^{2}}} +\frac{1}{8}{\frac {{\Delta}^{2}}{ \left( a-b
 \right) ^{2}{a_{{2,0}}}^{4}}},  \]
where $\Delta=a_{{2,0}} \left( 2a_{{0,1}}+a_{{1,0}}a+a_{{1,0}}b \right)$.
Now, observe that all real irreducible affine curves derived from the system are hyperbolas with parallel asymptotes, indeed with direction vectors $(a,1)$ and $(b,1)$ (compare to Theorem 3 in  \cite{Co} or Lemma 6.1 in \cite{PRSS}), and hence Theorem \ref{theorem-haus-divisor} holds.
\\
Third, we take $D=(1:0:0)+(b:1:0)$, with  $b\in \R$, $b\neq 0$. The defining polynomial of ${\mathcal H}(2,D)$ is
$H=a_{{0,0}}{z}^{2}+a_{{0,1}}yz-a_{{1,1}}b{y}^{2}+a_{{1,0}}xz+a_{{1,1}}xy.$
We may assume w.l.o.g. that $a_{1,1}\neq 0$, since otherwise for all $\cc$, in the linear system, $\deg(\cc_a)=1$. Then, $H(x,y,1)$ can be expressed as
 \[  \left( y-{\frac {a_{{0,1}}}{2a_{{1,1}}b}}-{\frac {x}{2b}}
 \right) ^{2}-\frac{1}{4b^2} \left( x+2{\frac {a_{{1,0}}b}{a_{{1,1}}}
}+{\frac {a_{{0,1}}}{a_{{1,1}}}} \right) ^{2}-\frac{1}{b^2}\left({\frac {ba_{{0,0}}}{a
_{{1,1}}}}+{\frac {{a_{{0,1}}}^{2}}{{a_{{1,1}}}^{2}}}\right)\]
\[+\frac{1}{4b^2} \left( 2
{\frac {a_{{1,0}}b}{a_{{1,1}}}}+{\frac {a_{{0,1}}}{a_{{1,1}}}}
 \right) ^{2} .  \]
Now, observe that all real irreducible affine curves derived from the system are hyperbolas with parallel asymptotes, indeed with direction vectors $(1,0)$ and $(b,1)$ (compare to Theorem 3 in  \cite{Co} or Lemma 6.1 in \cite{PRSS}), and hence Theorem \ref{theorem-haus-divisor} holds.

\vspace*{1 mm}

\noindent {\sf [Complex points: the compact case]} Since both points have to be complex and conjugated, we can assume w.l.o.g. that $D$ is of the form
$D=(a+\ii:1:0)+(a-\ii:1:0)$, where $\ii$ is the imaginary unit. The defining polynomial of ${\mathcal H}(2,D)$ is
$H=a_{{0,0}}{z}^{2}+a_{{0,1}}yz+{y}^{2}a_{{2,0}}{a}^{2}+{y}^{2}a_{{2,0}}+
a_{{1,0}}xz-2a_{{2,0}}axy+a_{{2,0}}{x}^{2}
$.
We may assume w.l.o.g. that $a_{2,0}\neq 0$, since otherwise for all $\cc$, in the linear system, $\deg(\cc_a)=1$. Then, $H(x,y,1)$ can be expressed as
\[  \left( x+{\frac {a_{{1,0}}}{2a_{{2,0}}}}-ay \right) ^{2}+ \left(
y+{\frac {a_{{0,1}} +a_{{1,0}}a   }{2a_{{2,0}}}} \right) ^{2}-{\frac {-4a_{{0,0}}a_{{2,0}}+{a_{{1,0}}}^{2}+
{a_{{0,1}}}^{2}+2a_{{0,1}}a_{{1,0}}a+{a_{{1,0}}}^{2}{a}^{2}}{{4a_{{2,0
}}}^{2}}}.
\]
So, if $a\neq 0$ we get ellipses and for $a=0$ we get  circles (note that for $a=0$ the divisor is defined by the cyclic points)
\[  \left( x+
{\frac {a_{{1,0}}}{2a_{{2,0}}}} \right) ^{2}+ \left( y+{\frac {a_{{0,1}}}{2a_{{2,0}}}} \right) ^{2}-{\frac {-4a_{{0
,0}}a_{{2,0}}+{a_{{1,0}}}^{2}+{a_{{0,1}}}^{2}}{{4a_{{2,0}}}^{2}}}.
\]
In both cases, the statement is Theorem \ref{theorem-haus-divisor} clearly holds.

\para

\section{Rational Hausdorff  Divisors}\label{sec-rat-haus-div}

We start this section recalling briefly the concept of rational curve. An algebraic curve is called {\sf rational} if it can be parametrized by means of rational functions; in other words, if $F(x,y,z)$ is the homogeneous polynomial defining a projective curve $\cc$, then  $\cc$ is rational if there exist three polynomials $p_1(t),p_2(t),p_3(t)$, not all constant, such that $\gcd(p_1,p_2,p_3)=1$, and $F(p_1(t),p_2(t),p_3(t))$. In this case, $(p_1(t),p_2(t),p_3(t))$ is a {\sf rational parametrization} of $\cc$. If $\cc$ is not the line at infinity $z=0$, we usually  write the parametrization as $(p_1(t)/p_3(t),p_2(t)/p_3(t),1)$. The rationality of a curve can be deduced from its genus. An irreducible curve is rational if and only if its genus is 0. The {\sf genus}, intuitively speaking, measures the difference between the maximum of singularities the curve may have an the actual number of them. More precisely, the genus is given  by the formula
\[ \frac{(\deg(\cc)-1)(\deg(\cc)-2)}{2}-\sum_{P\in \cc} \frac{\mult(\cc,P)(\mul(\cc,P)-1)}{2} \]
where $\deg(\cc)$ denotes the degree of $\cc$ (i.e. the degree of the form $F$), $\mult(\cc,P)$ denotes the multiplicity of $\cc$ at $P$, and where the sum is taken also over the infinitely near, or neighboring,  points (see Chapter 3 in \cite{SWP} for further details). Note that if $\cc$ is irreducible and has a point of multiplicity $(\deg(\cc)-1)$ then the genus is 0, and hence $\cc$ is rational. Curves satisfying this particular case are called {\sf monomial curves}.

\para

In this section, we introduce the notion of rational divisor or genus 0 divisor. Similarly, one can consider the concept of genus $g$ divisor but, here, we are only interested in the genus 0 case.  The definition we give here focuses on singularities of ordinary type; i.e. all tangents at the point are different. The case of non-ordinary singularities can also be introduced. For that case, associated to each point in the divisor a sequence of linear transformations and "neighboring" divisors would have to be attached.

\para

\begin{definition}\label{def-rational-div}
Let $n\in \N$, $n>0$, and $D=\sum_{i=1}^{m} s_i P_i$ an effective divisor. If $n\in \{1,2\}$, we say that $D$ is an {\sf $n$-rational divisor} if $\deg(D)=1$. If $n>2$, we say that $D$ is an {\sf $n$-rational divisor} if $s_i>1$ for $i=1,\ldots,m$, and
\[(n-1)(n-2)=\sum_{i=1}^{m}  s_i(s_i-1). \]
If  $D$ is $n$-rational, and only contains a point,  we say that $D$ is an {\sf $n$-monomial divisor}. \hfill$\bullet$
\end{definition}

\para

%\para

%\begin{definition}\label{def-rational-div}
%Let $n,g\in \N$, $n>0$, we say that the effective divisor $D=\sum_{i=1}^{m} s_i P_i$ is an {\sf $(n,g)$-genus divisor} if
%\[g=\frac{(n-1)(n-2)}{2}-\sum_{i=1}^{m}  \frac{s_i(s_i-1)}{2}. \]
%If $D$ is an $(n,0)$-genus divisor we also say that $D$ is an {\sf $n$-rational divisor}. Furthermore, if $D$ is $n$-rational, and only contains a %point,  we say that $D$ is an {\sf $n$-monomial divisor}.
%\end{definition}

%\para

 Note that $D=P$ is a $1$-monomial and a $2$-monomial divisor. In general, for $n>2$, $D=(n-1)P$ is an $n$-monomial divisor. On the other hand, for $n=3$ the only possible rational divisors are monomial, i.e. $D=2P$, while for $n>3$ the situation is open to more possibilities; for instance, for $n=4$, one has $D=3P$ or $D=2P_1+2P_2+2P_3$.

\para

The singular locus, and hence the rational divisor, of a real irreducible plane curve can be decomposed as the union of conjugate singularities (see Section 3.3 in \cite{SWP}; more particularly Corollary 3.23). We introduce the next definition.

\para

\begin{definition}\label{def-real-haus-divisor}
Let $\K$ be a subfield of $\C$. We say that a rational divisor $D$ is {\sf $\K$-definable} if $D$ can be expressed as
\[ D= \sum_{i=1}^{m_1} s_{1} P_{1,i} +\cdots + \sum_{i=1}^{m_k} s_{k} P_{k,i} \]
where $\{P_{j,1},\ldots,P_{j,m_j}\}$ is a family of $\K$-conjugated points, for $j=1,\ldots,k$. \hfill$\bullet$
\end{definition}

\para

Observe that a monomial divisor is $\R$-definable if and only if the point in the divisor is real.
The next results deal with the dimension.

\para

\begin{theorem}\label{theorem-dim-rational-div}
Let $D$ be an $n$-rational divisor, then $\dim({\cal H}(n,D))\geq 3n-1-\deg(D)$.
\end{theorem}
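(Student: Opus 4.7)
The plan is to apply the general dimension inequality (\ref{equation-dim}) and simplify the right-hand side using the defining identity of an $n$-rational divisor. All of the work is a short algebraic manipulation, so I would split into the trivial cases $n\in\{1,2\}$ and the main case $n>2$.

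For $n\in\{1,2\}$ we have $\deg(D)=1$, so the target bound reads $\dim({\cal H}(n,D))\geq 3n-2$. Using (\ref{equation-dim}) with $D=P$ (so a single $s_i=1$), we obtain $\dim({\cal H}(n,D))\geq \frac{n(n+3)}{2}-1$, which is $1$ for $n=1$ and $4$ for $n=2$, matching exactly $3n-2$ in both cases. These cases are therefore immediate from (\ref{equation-dim}).

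For $n>2$, the key observation is the trivial splitting
\[
\frac{s_i(s_i+1)}{2}=\frac{s_i(s_i-1)}{2}+s_i.
\]
Summing over $i=1,\dots,m$ and using the rational-divisor hypothesis $(n-1)(n-2)=\sum_{i=1}^m s_i(s_i-1)$ together with $\deg(D)=\sum_{i=1}^m s_i$, we get
\[
\sum_{i=1}^{m}\frac{s_i(s_i+1)}{2}=\frac{(n-1)(n-2)}{2}+\deg(D).
\]
Substituting into (\ref{equation-dim}) yields
\[
\dim({\cal H}(n,D))\geq \frac{n(n+3)}{2}-\frac{(n-1)(n-2)}{2}-\deg(D)=3n-1-\deg(D),
\]
since $n(n+3)-(n-1)(n-2)=(n^2+3n)-(n^2-3n+2)=6n-2$. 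This is exactly the claimed bound.

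There is no real obstacle: the only subtlety is that (\ref{equation-dim}) is formulated for arbitrary effective divisors, whereas Definition \ref{def-rational-div} treats $n\in\{1,2\}$ and $n>2$ separately; hence the small case split above. No assumption of general position, $\K$-definability, or ordinariness of the imposed singularities is needed, which is consistent with the statement being a lower bound for the dimension rather than an equality.
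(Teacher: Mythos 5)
Your proof is correct and follows exactly the paper's route: the paper's proof is just the one-line remark that the bound follows from inequality (\ref{equation-dim}) and Definition \ref{def-rational-div}, and your argument is the explicit computation behind that remark (including the harmless case split for $n\in\{1,2\}$), with the arithmetic $n(n+3)-(n-1)(n-2)=6n-2$ checking out.
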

\begin{proof}
It follows from inequality (\ref{equation-dim}) and Def. \ref{def-rational-div}.
\end{proof}

\para

\begin{corollary}
Let $D$ be an $n$-monomial divisor, then $\dim({\cal H}(n,D))\geq 2n$.
\end{corollary}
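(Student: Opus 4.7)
The plan is to obtain the bound as a direct specialization of Theorem \ref{theorem-dim-rational-div}, after identifying what $\deg(D)$ must be for a monomial divisor. So the only real work lies in pinning down the multiplicity $s$ appearing in $D=sP$ as a function of $n$.

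Suppose $D$ is an $n$-monomial divisor, so by Def.\ \ref{def-rational-div} it has the form $D=sP$ for some positive integer $s$. For $n\ge 3$, the rationality condition requires $s>1$ together with the single scalar equation
\[
(n-1)(n-2)=s(s-1).
\]
Since the function $f(x)=x(x-1)$ is strictly increasing on the integers $x\ge 1$, this equation has a unique solution in the positive integers, namely $s=n-1$. Hence $\deg(D)=s=n-1$ for every $n\ge 3$. For $n=2$, the definition stipulates $\deg(D)=1$, which is also $n-1$; so in both regimes we have $\deg(D)=n-1$.

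Now I simply substitute into the lower bound of Theorem \ref{theorem-dim-rational-div}:
\[
\dim({\cal H}(n,D))\;\ge\;3n-1-\deg(D)\;=\;3n-1-(n-1)\;=\;2n,
\]
which is the claimed inequality.

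The proof is essentially a one-line computation once $\deg(D)=n-1$ is extracted, so there is no genuine obstacle; the only point that deserves explicit mention is that the rationality condition $(n-1)(n-2)=s(s-1)$ really does force $s=n-1$ rather than admitting some other positive integer root, and this is immediate from monotonicity of $x(x-1)$.
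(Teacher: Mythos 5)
Your proof is correct and is exactly the argument the paper intends: an $n$-monomial divisor has $\deg(D)=n-1$ (for $n\geq 3$ the equation $s(s-1)=(n-1)(n-2)$ has the unique positive root $s=n-1$ by monotonicity, and for $n=2$ the definition gives $\deg(D)=1=n-1$), so Theorem \ref{theorem-dim-rational-div} yields $\dim({\cal H}(n,D))\geq 3n-1-(n-1)=2n$. The only caveat is the degenerate case $n=1$, where $\deg(D)=1\neq n-1$ and the theorem only gives the (true) bound $1$; this case is implicitly left aside both in your argument and in the paper.
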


\para

The main property  on this type of divisors is the following.
\para

\begin{theorem}\label{theorem-main-property-genus-divisors}
Let $D$ be an $n$-rational divisor.
\begin{enumerate}
\item Every irreducible curve in ${\cal H}(n,D)$ is rational.
\item If $D$ is irreducible (see Def. \ref{Def-irred}), then $\curva({\cal H}(n,D))$ is rational.
\end{enumerate}
\end{theorem}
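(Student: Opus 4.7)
The plan is to apply the genus formula recalled at the start of the section and exploit the defining equality of an $n$-rational divisor.

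First I would dispatch the easy range $n\in\{1,2\}$. Here Definition \ref{def-rational-div} only requires $\deg(D)=1$, and any irreducible projective plane curve of degree $\leq 2$ is a line or an irreducible conic, hence rational. This handles both parts at once, since in this case $\curva(\mathcal{H}(n,D))$ is itself a line or conic over $\overline{\C(\Lambda)}$.

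For $n>2$, the main tool is the genus bound: for any irreducible plane curve $\cc$ of degree $n$,
\[
0\leq g(\cc)=\frac{(n-1)(n-2)}{2}-\sum_{P\in\cc}\frac{\mult(\cc,P)(\mult(\cc,P)-1)}{2},
\]
where the sum runs over all points of $\cc$, including infinitely near ones. Let $\cc$ be an irreducible curve of degree $n$ in $\mathcal{H}(n,D)$, so that $\mult(\cc,P_i)\geq s_i$ for each $i=1,\ldots,m$. Because $s_i\geq 2$ and the map $x\mapsto x(x-1)/2$ is increasing on $[1,\infty)$, the contribution of $P_i$ alone (forgetting infinitely near points) to the sum is at least $\tfrac{s_i(s_i-1)}{2}$. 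Hence
\[
\sum_{P\in\cc}\frac{\mult(\cc,P)(\mult(\cc,P)-1)}{2}\geq\sum_{i=1}^{m}\frac{s_i(s_i-1)}{2}=\frac{(n-1)(n-2)}{2}
\]
by the defining equality of an $n$-rational divisor. Combining this with the genus inequality forces $g(\cc)=0$, and therefore $\cc$ is rational. This proves part (1).

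Part (2) is the same argument transported to $\curva(\mathcal{H}(n,D))$, viewed as an irreducible curve of degree $n$ over the algebraically closed field $\overline{\C(\Lambda)}$: since the vanishing of all partial derivatives of order $<s_i$ at each $P_i$ is a $\C$-linear condition on the coefficients, it is automatically satisfied by the generic defining polynomial $H(\Lambda,x,y,z)$, so $\mult(\curva(\mathcal{H}(n,D)),P_i)\geq s_i$. The genus formula is valid in any characteristic zero, so the same chain of inequalities yields genus $0$, and hence rationality over $\overline{\C(\Lambda)}$.

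The only delicate point I foresee is confirming that the multiplicity conditions imposed by the linear system at each $P_i$ truly persist for the generic curve $\curva(\mathcal{H}(n,D))$; as noted above, this is immediate because imposing multiplicity $\geq s_i$ at a fixed point amounts to finitely many $\C$-linear conditions on $\Lambda$, so if every specialization satisfies them, the generic polynomial does too.
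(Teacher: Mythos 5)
Your proof is correct and follows essentially the same route as the paper: the paper's own proof is just the one-line observation that the rationality conditions in the divisor force the genus of any irreducible curve in the system (or of $\curva({\cal H}(n,D))$ over $\overline{\C(\Lambda)}$) to be zero, which is exactly the genus-formula computation you spell out. Your added details (the easy cases $n\in\{1,2\}$, the inequality $g\geq 0$ combined with the imposed multiplicities, and the persistence of the linear multiplicity conditions for the generic member) are the natural fleshing-out of that same argument.
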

\begin{proof}
Since $D$ is rational, if the curve is irreducible, its genus is zero. So the curve is rational.
\end{proof}

\para

Our next step is to combine both notions, Hausdorff and rational divisor.

\para

\begin{definition}\label{def-real-haus-divisor}
We say that an effective divisor $D$ is an {\sf $n$-rational Hausdorff divisor} if
$D$ can be expressed as
\[ D= D_1+D_2 \]
where $D_1$ is  $n$-degree Hausdorff, and $D_2$ is $n$-rational and no point in $D_2$ is on the line $z=0$ (i.e. all points in $D_2$ are affine). If both $D_1,D_2$ are $\K$-definable, we say that $D$ is {\sf $\K$-definable}, where $\K$ is a subfield of $\C$.
Given a rational Hausdorff divisor $D$, we denote by $D_H$ and by $D_S$ the Hausdorff and the singular part of $D$, respectively. In addition, we say that ${\cal H}(n,D)$ is the {\sf rational Hausdorff linear space} associated to $D$.
\hfill$\bullet$
\end{definition}

\para

Note that, since all points in $D_1$ have to be at infinity and all points in $D_2$ have to be affine, the decomposition $D_1+D_2$ is unique.

\para

Now, we analyze ${\cal H}(n,D)$ where $D$ is an $n$-rational Hausdorff divisor. First, we observe that every irreducible curve in ${\cal H}(n,D)$ is smooth at the line $z=0$ and rational. Let us study the dimension. By  Proposition \ref{prop-dim} and Theorem \ref{theorem-dim-rational-div}, we get the following result.

\para

\begin{theorem}\label{theorem-dim-rational-hauss-divisor}
Let $D=D_H+D_S$ be an $n$-rational Hausdorff divisor then
\[ \dim({\cal H}(n,D))\geq 2n-1-\deg(D_S). \]
\end{theorem}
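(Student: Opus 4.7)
The plan is to apply the general dimension bound (\ref{equation-dim}) directly to the combined divisor $D=D_H+D_S$. Since every point of $D_H$ lies on $z=0$ while every point of $D_S$ is affine, the two parts share no support; hence the sum of local contributions on the right-hand side of (\ref{equation-dim}) splits cleanly into a Hausdorff piece and a singular piece, and the job reduces to evaluating each piece in closed form.

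For the Hausdorff piece, Definition \ref{def-divisor-hau} forces every multiplicity $s_i$ to equal $1$, so each of the $n$ points contributes $\frac{1\cdot 2}{2}=1$, for a total of $n$. For the singular piece $D_S=\sum_j r_j Q_j$, the key observation is the elementary identity
\[ \frac{r_j(r_j+1)}{2}=\frac{r_j(r_j-1)}{2}+r_j, \]
which rewrites the singular contribution to (\ref{equation-dim}) as $\sum_j \frac{r_j(r_j-1)}{2}+\deg(D_S)$. The first summand is exactly what Definition \ref{def-rational-div} controls: for $n>2$ the rationality condition $\sum_j r_j(r_j-1)=(n-1)(n-2)$ yields $\frac{(n-1)(n-2)}{2}$, and for $n\in\{1,2\}$ the hypothesis $\deg(D_S)=1$ together with the fact that a single simple point contributes $0$ to $\sum_j \frac{r_j(r_j-1)}{2}$ gives the same closed form (the expression $\frac{(n-1)(n-2)}{2}$ simply vanishes in those two cases).

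Assembling the two pieces,
\[ \dim({\cal H}(n,D))\geq \frac{n(n+3)}{2}-n-\frac{(n-1)(n-2)}{2}-\deg(D_S), \]
and an arithmetic simplification of the three non-$\deg(D_S)$ summands collapses them to $2n-1$, which is the claimed inequality. There is essentially no obstacle here: the entire argument is bookkeeping around (\ref{equation-dim}), with the only conceptual point being that the rationality condition of Definition \ref{def-rational-div} is tailored exactly to cancel the $\frac{r_j(r_j-1)}{2}$ portion of the bound, leaving only the linear correction $\deg(D_S)$. One small care-point is to verify the $n=1,2$ boundary cases by direct substitution so as to present a single uniform bound.
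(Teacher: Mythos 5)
Your proof is correct, and it is essentially the paper's argument: the paper deduces the bound by combining Proposition \ref{prop-dim} and Theorem \ref{theorem-dim-rational-div}, both of which are exactly the specializations of the condition-counting inequality (\ref{equation-dim}) that you carry out directly, using that the supports of $D_H$ and $D_S$ are disjoint so the linear conditions simply add. Your handling of the $n\in\{1,2\}$ boundary cases and the identity $\frac{r(r+1)}{2}=\frac{r(r-1)}{2}+r$ matches the intended bookkeeping, so nothing further is needed.
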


\para

\begin{corollary}\label{col-dim-monomial-rat-haus-div}
If $D$ is an $n$-monomial Hausdorff divisor, then $\dim({\cal H}(n,D))\geq n.$
\end{corollary}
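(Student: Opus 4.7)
The plan is to obtain the bound as an immediate specialization of Theorem \ref{theorem-dim-rational-hauss-divisor}, which gives
\[
\dim({\cal H}(n,D))\geq 2n-1-\deg(D_S)
\]
for any $n$-rational Hausdorff divisor $D=D_H+D_S$. So the only task is to read off $\deg(D_S)$ in the monomial case and simplify.

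First I would unpack the hypothesis. By Definition \ref{def-real-haus-divisor}, $D$ being $n$-rational Hausdorff means $D=D_H+D_S$ with $D_H$ an $n$-degree Hausdorff divisor and $D_S$ an $n$-rational divisor supported at affine points. The extra assumption that $D$ is $n$-monomial means (by Definition \ref{def-rational-div}) that $D_S$ itself is $n$-monomial; that is, $D_S$ has a single support point. For $n\geq 3$ the definition forces $D_S=(n-1)P$ for some affine $P$, so $\deg(D_S)=n-1$. For $n=2$, the definition of an $n$-rational divisor requires $\deg(D_S)=1$, which again equals $n-1$; thus in every admissible case $\deg(D_S)=n-1$.

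Substituting into the bound from Theorem \ref{theorem-dim-rational-hauss-divisor} yields
\[
\dim({\cal H}(n,D))\geq 2n-1-(n-1)=n,
\]
which is exactly the claim.

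There is no genuine obstacle: once the degree of the singular part of an $n$-monomial divisor is identified, the corollary is a direct arithmetic consequence of the preceding theorem. The only mild subtlety is verifying the edge case $n=2$ inside the definition of $n$-rational divisor, but there $\deg(D_S)=1=n-1$ so the same formula applies without modification.
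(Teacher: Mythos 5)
Your proof is correct and is essentially the paper's own (implicit) argument: the corollary is presented as an immediate consequence of Theorem \ref{theorem-dim-rational-hauss-divisor}, obtained exactly as you do by observing that a monomial singular part satisfies $\deg(D_S)=n-1$ (since $s(s-1)=(n-1)(n-2)$ with $s>1$ forces $s=n-1$ for $n>2$, and $\deg(D_S)=1=n-1$ for $n=2$) and substituting into the bound $2n-1-\deg(D_S)$. Nothing further is needed.
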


\para

We illustrate the previous results by some examples.

\para

\begin{example}\label{example-cuadrics-2}
We consider the divisor
$D=(1:1:0)+(-1:1:0)+(0:1:0)+(1:0:0)+2(3:-2:1)+2(1:1:1)+2(2:3:1).$
$D$ can be expressed as $D=D_H+D_S$ where
\[ \begin{array}{l}
D_H=(1:1:0)+(-1:1:0)+(0:1:0)+(1:0:0),\\
D_S=2(3:-2:1)+2(1:1:1)+2(2:3:1).
 \end{array} \]
Note that $D_H$ is a $4$-degree Hausdorff divisor (indeed, the one in Example \ref{example-cuadrics}) and $D_S$ is a $4$-rational divisor. So, $D$ is a $4$-rational Hausdorff divisor, in fact $\R$-definable.
The defining polynomial of ${\cal H}(4,D)$ is (where $\Lambda=(\lambda_1,\lambda_2)$)

\[
H(\Lambda,x,y,z)=( {\frac {65}{2}}\lambda_{2}-{\frac {8175}{98}}\lambda_{1}
 ) {z}^{4}+ ( 17\lambda_{2}-{\frac {1518}{49}}\lambda_{1}
 ) y{z}^{3}+ ( -{\frac {29}{2}}\lambda_{2}+{\frac {2787}{98
}}\lambda_{1} ) {y}^{2}{z}^{2}\]\[+ \lambda_{2}{y}^{3}z+ ( -97\lambda_{2}+{\frac {11618}{49}}\lambda_{1} ) x{z}^{3}+
( \frac{11}{2}\lambda_{2}-{\frac {1789}{98}}\lambda_{1} ) xy{z}^{2} + ( \frac{9}{2}\lambda_{2}-{\frac {121}{14}}\lambda_{1} ) x{y}^{2}z\]\[-\lambda_{1}x{
y}^{3}+ ( {\frac {143}{2}}\lambda_{2}-{\frac {16873}{98}}\lambda_{1}) {x}^{2}{z}^{2}+ ( -\frac{11}{2}\lambda_{2}+{\frac {163}{14}}
\lambda_{1} ) {x}^{2}yz+ ( -15\lambda_{2}+{\frac {254}{7}}
\lambda_{1} ) {x}^{3}z+\lambda_{1}{x}^{3}y.
 \]
\noindent Observe that the number of parameters $\lambda_{i}$ is 2, and hence $\dim({\cal H}(4,D))=1$; check with Theorem \ref{theorem-dim-rational-hauss-divisor}. In Fig. \ref{figure-2} one may see 5 different curves in the linear system.  \hfill $\Box$
\begin{figure}[h]
  \centering
\includegraphics[width=5cm]{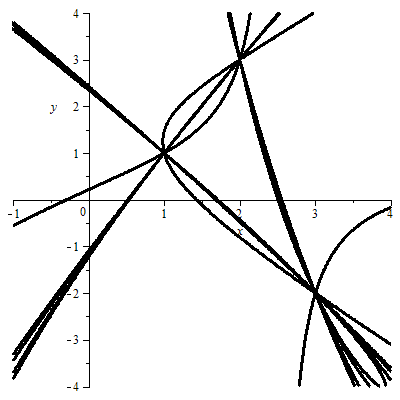} \hspace*{2mm} \includegraphics[width=5cm]{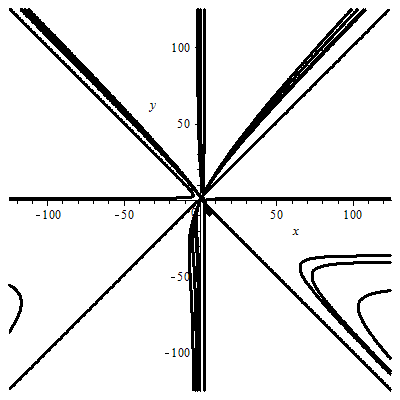}
\caption{Some curves in ${\cal H}(4,D)$ of Example \ref{example-cuadrics-2}. Left: Zoom at the singular area. Right: general view}
\label{figure-2}
\end{figure}
\end{example}

\para

\begin{example}\label{example-cuadrics-3}
In Example \ref{example-cuadrics-2}, we took
$D=D_H+D_S$ with
\[ \begin{array}{l}
D_H=(1:1:0)+(-1:1:0)+(0:1:0)+(1:0:0),\\
D_S=2(3:-2:1)+2(1:1:1)+2(2:3:1),
\end{array}\]
and ${\cal H}(4,D)$ was irreducible over $\C$. However, if   $D_S=2(2:2:1)+2(1:1:1)+2(2:3:1)$, ${\cal H}(4,D)$ decomposes as the union of two lines and a system of conics. More precisely, the defining polynomial is
$$4\left( x-2z \right)  \left( x-y \right)  \left( 2x\lambda_{2}z
+2xy\lambda_{1}-{z}^{2}\lambda_{2}+9{z}^{2}\lambda_{1}-yz\lambda_{2}-13y
z\lambda_{1}+2\lambda_{1}{y}^{2} \right)
.$$
Obviously the reason, in this example, is that  two double points, namely $(2:2:1),(1:1:1)$, and one simple point, namely $(1:1:0)$, are on the same line. \hfill$\Box$
\end{example}

\para

The next theorem shows how to analyze the irreducibility of rational Hausdorff divisors.

\para

\begin{theorem}\label{theorem-irred-2}
Let $D$ be an $n$-rational Hausdorff divisor. Then, $D$ is irreducible (see Def. \ref{Def-irred}) if and only if the defining polynomial of ${\cal H}(n,D)$ is irreducible over $\C$.
\end{theorem}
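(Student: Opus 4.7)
The plan is very short because the statement follows almost immediately from Corollary \ref{corollary-irred}. The forward implication is trivial: if the defining polynomial $H(\Lambda,x,y,z)$ of $\cH(n,D)$ is irreducible over the algebraic closure of $\C(\Lambda)$, then it is a fortiori irreducible over the smaller ring $\C[\Lambda,x,y,z]$.

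For the converse, the key observation is that $\cH(n,D)$ sits as a non-empty linear subsystem of the pure Hausdorff linear system $\cH(n,D_H)$. Indeed, writing $D=D_H+D_S$ as in Definition \ref{def-real-haus-divisor}, any projective curve passing through the points of $D_H$ with the multiplicities prescribed by $D_H$ and through the points of $D_S$ with the multiplicities prescribed by $D_S$ certainly passes through the points of $D_H$ with those multiplicities alone. Hence the incidence conditions defining $\cH(n,D)$ are a superset of those defining $\cH(n,D_H)$, so $\cH(n,D)\subseteq \cH(n,D_H)$ as linear systems of curves. Non-emptiness of $\cH(n,D)$ is guaranteed by the existence of its defining polynomial (equivalently, by the dimension bound in Theorem \ref{theorem-dim-rational-hauss-divisor}).

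Having established this inclusion, we simply apply Corollary \ref{corollary-irred} with the Hausdorff divisor there being $D_H$, the subsystem $\overline{\cH}$ being $\cH(n,D)$, and $\overline{H}$ being the defining polynomial of $\cH(n,D)$. That corollary provides exactly the equivalence we seek: $\overline{H}$ is irreducible over $\overline{\C(\overline{\Lambda})}$ if and only if it is irreducible over $\C$. By Definition \ref{Def-irred}, this is precisely the statement that $D$ is irreducible.

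Since everything reduces to invoking an earlier result, there is no real obstacle: the only thing one has to check is the set-theoretic inclusion $\cH(n,D)\subseteq \cH(n,D_H)$, which is immediate from the definitions, together with non-emptiness, which is built into the hypothesis that $\cH(n,D)$ has a defining polynomial.
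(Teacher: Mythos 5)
Your proposal is correct and follows essentially the same route as the paper: decompose $D=D_H+D_S$, observe that ${\cal H}(n,D)$ is a non-empty linear subsystem of the Hausdorff system ${\cal H}(n,D_H)$, and apply Corollary \ref{corollary-irred}. You merely spell out the inclusion and the trivial direction in more detail than the paper does.
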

\begin{proof}
Let $D=D_H+D_S$. Now, observe that ${\cal H}(n,D)$ is a linear subsystem of ${\cal H}(n,D_H)$. Now the result follows from Corollary \ref{corollary-irred}.
\end{proof}

\para

The bounds in Theorem \ref{theorem-dim-rational-hauss-divisor} and Corollary \ref{col-dim-monomial-rat-haus-div} are equalities in general position, but in some cases are strict inequalities as the following example shows.

\para

\begin{example}\label{ex-quintic-dim-degenado}
Let $D=D_H+D_S$ be a $5$-rational Hausdorff divisor, where $D_S=\sum_{i=1}^{6} 2P_i$ with $P_i=(i^3:i^2:1)$. Theorem \ref{theorem-dim-rational-hauss-divisor} ensures that $\dim({\cal H}(5,D))\geq 9-12=-3$; i.e. in general ${\cal H}(5,D)=\emptyset$. However, taking
$D_H= \sum (\alpha:1:0)$, with $p(\alpha)=0$, where ($\mu_4,\mu_3\in \C$)
$$ p(t)={t}^{5}+\mu_{{4}}{t}^{4}+\mu_{{3}}{t}^{3}-{\frac {820955079}{
2000}}\mu_{{4}}{t}^{2}-{\frac {39925319}{4000}}\mu_{{3}}{t
}^{2}-{\frac {37424614507}{4000}}{t}^{2}+{\frac {1609223259}{200}}
\mu_{{4}}t$$ $$+{\frac {73371752447}{400}}t+{\frac {78188299}{400}}
\mu_{{3}}t-{\frac {1369716346817}{1000}}-{\frac {30038293649}{500}
}\mu_{{4}}-{\frac {1459046589}{1000}}\mu_{{3}},$$
it holds that $\dim({\cal H}(5,D))=0$. Indeed, in this case, ${\cal H}(5,D)$ consists in the curve defined by

\para

\noindent $-59099417781138{y}^{4}z\mu_{{4}}-40586935102980{x}^{3}{z}^{2}
\mu_{{4}}-66732447304488{x}^{2}{z}^{3}\mu_{{4}}-
1111124672000{y}^{2}{z}^{3}\mu_{{3}}+1295029858022677{x}^{2}y{
z}^{2}+872619620320000xy{z}^{3}+1948148631{x}^{4}z\mu_{{3}}-
1956340631{x}^{2}{y}^{3}\mu_{{3}}+196000\mu_{{4}}{x}^{4}y+
935066250218170x{y}^{3}z-11652560640000y{z}^{4}\mu_{{4}}-
1437222765209{y}^{4}z\mu_{{3}}+80264569742{x}^{4}z\mu_{{4}
}+796735296000x{z}^{4}\mu_{{3}}+32965011072000x{z}^{4}\mu_
{{4}}-985374741890{x}^{3}{z}^{2}\mu_{{3}}+196000\mu_{{3}}{
x}^{3}{y}^{2}+1577038793820x{y}^{4}\mu_{{4}}-279861120000y{z}^
{4}\mu_{{3}}+44812924280488{y}^{3}{z}^{2}\mu_{{4}}-
45598986048000{y}^{2}{z}^{3}\mu_{{4}}+38312266510x{y}^{4}
\mu_{{3}}+1081197980884{y}^{3}{z}^{2}\mu_{{3}}+
189114415140000x{y}^{2}{z}^{2}+267281628256132{x}^{2}{y}^{2}z-
80453597742{x}^{2}{y}^{3}\mu_{{4}}-1628676180884{x}^{2}{z}^{3}
\mu_{{3}}-35862859923030{x}^{3}yz+20736000000{z}^{5}\mu_{{
3}}+870912000000{z}^{5}\mu_{{4}}-285973131444{y}^{5}\mu_{{
3}}-11775011110408{y}^{5}\mu_{{4}}-268037400960000y{z}^{4}-
1036755669824000{y}^{2}{z}^{3}+1026724988552452{y}^{3}{z}^{2}-
1346097219374677{y}^{4}z+754414452288000x{z}^{4}+35952158699030x
{y}^{4}-1515620122272452{x}^{2}{z}^{3}-1833806110843{x}^{2}{y}^{3}
-925538635338170{x}^{3}{z}^{2}+1830618086843{x}^{4}z+
20176128000000{z}^{5}-268464403976132{y}^{5}+196000{x}^{5}+
953164576000xy{z}^{3}\mu_{{3}}+38702489280000xy{z}^{3}\mu_
{{4}}+218934548000x{y}^{2}{z}^{2}\mu_{{3}}+8600406696000x{y}^{
2}{z}^{2}\mu_{{4}}+997711569890x{y}^{3}z\mu_{{3}}+
41040600622980x{y}^{3}z\mu_{{4}}+1375331229209{x}^{2}y{z}^{2}
\mu_{{3}}+56732067257138{x}^{2}y{z}^{2}\mu_{{4}}+
284276051444{x}^{2}{y}^{2}z\mu_{{3}}+11716423378408{x}^{2}{y}^
{2}z\mu_{{4}}-38158910510{x}^{3}yz\mu_{{3}}-1572326561820{
x}^{3}yz\mu_{{4}}. $ \hfill $\Box$
\end{example}

\section{Parametrization of Rational Hausdorff Linear Systems}\label{sec-param-rat-Haus-Spaces}

Let $\K$ be a subfield of $\C$, and let $D=D_H+D_S$ be a  $\K$-definable $n$-rational Hausdorff divisor. Our goal in this section is to parametrize the curve $\curva({\cal H}(n,D))$ associated to $D$; that is the curve defined, over the algebraic closure of $\C(\Lambda)$, by the defining polynomial $H(\Lambda,x,y,z)$ of  the rational Hausdorff linear space ${\cal H}(n,D)$ (see Def. \ref{def-curve-asoc-linear-sytema}). Recall that, by Theorem \ref{theorem-main-property-genus-divisors}, if $D$ is irreducible, then $\curva({\cal H}(n,D))$ is rational.

\para

Thus, throughout this section we assume that $D$ is irreducible (see Def. \ref{Def-irred} and Theorem \ref{theorem-irred-2}) which, in particular, implies that  ${\cal H}(n,D)$ is not empty (see also Theorem \ref{theorem-dim-rational-hauss-divisor} and Corollary \ref{col-dim-monomial-rat-haus-div}). Moreover, let $H(\Lambda,x,y,z)\in \K[\Lambda][x,y,z]$, where $\Lambda$ is a set of parameters, be the defining polynomial of ${\cal H}(n,D)$; observe that the $\K$-definability of $D$ implies that $H$ is a polynomial over $\K$.

\para

  But before going into details, let us recall, at least intuitively, how the parametrization algorithms, based on adjoint curves, work. Since, we will be dealing only with ordinary singularities we simplify the exposition to that case; for further details, see \cite{SWP}. Say that $\cc$ is a rational projective curve of degree $k$. The simplest case is when $\cc$ is monomial; let $P$ be the $(k-1)$ fold-point of $\cc$. In this situation, the intersection of $\cc$ with $\curva({\cal H}(1,P))$ consists in $P$ and an additional  point that depends rationally on a parameter. This last point is indeed a parametrization of $\cc$. This method is called {\sf parametrization by lines}. In general, let $\{P_1,\ldots,P_s\}$ be the singularities of $\cc$, then  an {\sf adjoint curve to $\cc$} of degree $\ell$ (in general, $\ell\geq k-2$) is any curve in the linear system of curves
\[ {\cal H}(\ell,\sum_{i=1}^{s} (\mult(\cc,P_i)-1)P_i ).\]
Let ${\cal A}_{\ell}(\cc)$ denote the linear system above, that is the linear system of all adjoints to $\cc$ of degree $\ell$. Because of the genus formula and the dimension of ${\cal A}_{\ell}(\cc)$ it holds that taking a finite set of simple points $\{Q_1,\ldots,Q_r\}$ of $\cc$, for a suitable  $r$, and considering ${\cal H}^*:={\cal A}_{\ell}(\cc)\cap {\cal H}(\ell, Q_1+\cdots+Q_r)$ it holds that the intersection of $\cc$ with $\curva({\cal H}^*)$ consists in $\{P_1,\ldots,P_s\}\cup \{Q_1,\ldots,Q_r\}$ and an additional  point that depends rationally on a parameter. This last point is indeed a parametrization of $\cc$. Let us assume that the homogeneous form defining  $\cc$ has coefficients in  $\K$. Then
an important property, of these type of algorithms, is that the coefficients of the parametrization (field of parametrization) are in $\K$ (if $\cc$ was parametrized by lines) or in the smallest field containing $\K$ and the coefficients of the chosen points $\{Q_1,\ldots,Q_s\}$.

\para

As we said, our goal is to parametrize $\curva({\cal H}(n,D))$, but sometimes, we will also parametrize  the curve $\curva(\overline{\cal H})$ associated to a non-empty  linear subsystem $\overline{\cal H}$ of ${\cal H}(n,D)$. Applying the well-known parametrization algorithms, since the coefficients of the input curve are in $\K(\Lambda)$, one derives a rational parametrization of $\curva({\cal H}(n,D))$ over the algebraic closure of $\C(\Lambda)$. The challenge is to parametrize $\curva({\cal H}(n,D))$ over the smallest possible field extension of $\K(\Lambda)$. We start observing that, as a consequence of Hilbert-Hurwitz's Theorem (see Theorem 5.8. in \cite{SWP}) and Tsen's Theorem (Corollary 4 in \cite{Shafa}, Vol. I. pg. 73), every irreducible linear subsystem of dimension 0 or 1 of ${\cal H}(n,D)$ is parametrizable over $\C(\overline{\Lambda})$, where $\overline{\Lambda}$ are the parameters involved in the definition of the subsystem. Nevertheless, as a consequence of the Hausdorff divisor, we can improve this statement (note that no hypothesis on the dimension is required). We recall that {\sf proper} means that the parametrization defines a 1:1 map from a non-empty Zariski open subset of the parameter space and the curve.

\para

\begin{theorem}\label{Theorem-main-param} {\sf [General Parametrization Theorem]}
There exists a rational proper parametrization of $\curva({\cal H}(n,D))$ with coefficients in  $\L(\Lambda)$, where $\L$ is a finite algebraic extension of $\K$ of degree at most $n$. Furthermore, the degree of the extension is the lowest degree of the nontrivial irreducible factors, in $\K[\Lambda][x,y]$ of $H(\Lambda,x,y,0)$.
%Furthermore, if $n$ is odd, the parametrization  is over $\K(\Lambda)$.
\end{theorem}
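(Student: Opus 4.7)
The plan is to reduce the existence claim to a classical principle from the theory of genus~$0$ curves: any irreducible curve of geometric genus $0$ over a field $F$ that possesses a non-singular $F$-rational point is $F$-birational to $\mathbb{P}^{1}_{F}$, and therefore admits a proper rational parametrization over $F$. I will apply this principle with $F=\L(\Lambda)$ after exhibiting a smooth $\L(\Lambda)$-rational point of $\cc:=\curva({\cal H}(n,D))$ coming from the Hausdorff part of $D$. By Theorem~\ref{theorem-main-property-genus-divisors}, $\cc$ is an irreducible rational projective curve of degree $n$, defined by a polynomial $H(\Lambda,x,y,z)\in\K[\Lambda][x,y,z]$ thanks to the $\K$-definability of $D$; its singularities lie in the affine chart and are prescribed by $D_{S}$, while its $n$ points at infinity are the simple points of $D_{H}$.

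Next I would read off the arithmetic of the points at infinity from $H(\Lambda,x,y,0)$. Since $D_{H}$ is independent of $\Lambda$ and every member of the linear system passes through it,
\[ H(\Lambda,x,y,0)\;=\;c(\Lambda)\,\prod_{i=1}^{n}(b_{i}x-a_{i}y), \qquad c(\Lambda)\in\K[\Lambda], \]
with $(a_{i}:b_{i}:0)=P_{i}\in D_{H}$. Grouping the linear factors according to the $\mathrm{Gal}(\overline{\K}/\K)$-orbits yields a factorization $H(\Lambda,x,y,0)=c(\Lambda)\prod_{j}g_{j}(x,y)$ into nontrivial irreducible factors in $\K[\Lambda][x,y]$, with each $g_{j}\in\K[x,y]$ and $\deg g_{j}$ equal to the size of the corresponding orbit. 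Let $d$ be the smallest such degree; choose a projective root $\alpha$ of the minimal factor $g_{j_{0}}$, and set $\L:=\K(\alpha)$, so that $[\L:\K]=d\leq n$.

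The point $P=(\alpha:1:0)$ is then $\L(\Lambda)$-rational on $\cc$, and it is non-singular there because $D_{H}$ contributes each of its points with multiplicity exactly $1$ (a generic linearity check on the partial derivatives of $H$ at $P$ confirms this). Applying the principle above (a Hilbert-Hurwitz-type statement; see Theorem~5.8 in \cite{SWP}) over the field $\L(\Lambda)$ gives a proper rational parametrization of $\cc$ with coefficients in $\L(\Lambda)$, yielding the upper bound $[\L:\K]\leq d\leq n$. For the converse, suppose a proper rational parametrization $\phi$ of $\cc$ exists over $\F(\Lambda)$ for some extension $\K\subseteq\F\subseteq\C$. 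Properness makes $\phi$ birational, so each point of $D_{H}$ is attained at a parameter value algebraic over $\F(\Lambda)$; using the identity $\overline{\F(\Lambda)}\cap\overline{\K}=\overline{\F}$ (the transcendentals $\Lambda$ are algebraically independent from $\overline{\K}$) one concludes that the field of definition of at least one $\K$-Galois orbit of $D_{H}$ lies inside $\F$, whence $[\F:\K]\geq d$.

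The hard part is the minimality claim: rigorously separating the transcendental parameters $\Lambda$ from the algebraic data of the parametrization so that the information about the $\K$-algebraic points at infinity truly lands inside $\F$ rather than merely inside $\overline{\F(\Lambda)}$. This requires a careful analysis of how the places of the function field of $\cc$ over $\F(\Lambda)$ sit above the $\Lambda$-independent points $P_{i}\in\overline{\K}$, together with an unambiguous use of properness (birationality of $\phi$); the existence side, by contrast, is a clean consequence of the factorization of $H(\Lambda,x,y,0)$ and the classical genus-$0$ principle.
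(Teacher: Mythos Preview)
Your existence argument is correct and runs parallel to the paper's: both locate a smooth point of $\curva({\cal H}(n,D))$ on the line $z=0$ defined over an extension $\L$ with $[\L:\K]$ equal to the smallest degree $d$ of an irreducible factor of $H(\Lambda,x,y,0)$, and then parametrize over $\L(\Lambda)$. The only difference is packaging: the paper argues constructively, noting via B\'ezout that the $n$ points of $D_H$ are all simple, that the adjoint system is defined over $\K(\Lambda)$ because $D_S$ is $\K$-definable (Theorem~4.66 in \cite{SWP}), and then feeding a conjugate family of $d$ simple points into the $n$-degree adjoint algorithm of Section~4.8 in \cite{SWP}. You invoke instead the abstract principle that a geometrically irreducible genus-$0$ curve over a field $F$ with a smooth $F$-point is $F$-birational to $\mathbb{P}^{1}_{F}$. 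The paper's route is the algorithmic incarnation of yours; neither buys anything the other does not.

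Where you diverge is in reading the second sentence as a minimality claim. It is not: the paper is only sharpening ``degree at most $n$'' to ``degree equal to $d$'' for the extension it constructs, and its proof makes no attempt to show that every parametrization of $\curva({\cal H}(n,D))$ requires an extension of degree at least $d$. Your ``converse'' paragraph and your closing discussion of the ``hard part'' therefore target a statement that is not being asserted. Moreover, the sketch you give for it does not go through: from a proper parametrization $\phi$ over $\F(\Lambda)$ you obtain preimages $t_i=\phi^{-1}(P_i)\in\overline{\F(\Lambda)}$, but there is no mechanism forcing these $t_i$ (or the coordinates of the $P_i$) into $\F$; the identity you write as $\overline{\F(\Lambda)}\cap\overline{\K}=\overline{\F}$ does not carry that weight, since the $t_i$ have no reason to lie in $\overline{\K}$ in the first place. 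Drop the minimality discussion and your proof matches the paper's.
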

\begin{proof} Since $D_H$ is Hausdorff, and $\deg(D_H)=\deg(\curva({\cal H}(n,D)))$, by B\'ezout's Theorem it holds that all points of $\curva({\cal H}(n,D))$ on the line $z=0$ are simple. Moreover, these points are over $\C$. Furthermore, since $D_H$ is $\K$-definable, these points at infinity form a $\K$-conjugate family of points that can be decomposed as union of families, each defined by a factor of $H(\Lambda,x,y,0)$ in $\K[\Lambda][x,y]$; say that $k$ is the lowest degree of these factors. On the other hand, since $D_S$ is $\K$-definable, one has that the linear system of $n$-degree adjoint curves to $\curva({\cal H}(n,D))$ can be defined over $\K$ (see Theorem 4.66. in \cite{SWP}). Therefore,
 using the parametrization algorithm by $n$-degree adjoint curves (see Section 4.8 in \cite{SWP}) and taking the simple point in one of the families of cardinality $k$,   one deduces that $\curva({\cal H}(n,D))$ can be properly parametrized over $\L(\Lambda)$, where $\L$ is a finite algebraic extension of $\K$ of degree $k$.
%
%  Now, let $n$ be odd. Since $D_S$ is $\K$-definable, one has that the linear system of $(n-2)$-degree adjoint curves to $\cc_D$ is defined over $\K$ %(see Theorem 4.66 in \cite{SWP}). Moreover, $\cc_D$ is birrational to  a line (see Theorem 5.8. in \cite{SWP}). Moreover, note that since the map is %over $\K$, the defining polynomial of  the line  is over $\K(\Lambda)$. So, if $n$ is odd, $\cc_D$ can be parametrized over $\K(\Lambda)$.
\end{proof}

\para

From the previous proof one can derive an algorithm to parametrize $\curva({\cal H}(n,D))$ over $\L(\Lambda)$. Indeed, the extension $\L$ is the extension needed to express the simple points in $D_H$ used in the parametrization algorithm. In the following we analyze how to decrease the degree of the extension in some special cases.

\para

\begin{corollary}\label{corol-rat-point-at-infinity}
If one of the points in $D_H$ has coordinates over $\K$, there exists a rational proper parametrization of $\curva({\cal H}(n,D))$ with coefficients in  $\K(\Lambda)$.
\end{corollary}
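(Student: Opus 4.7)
The plan is to show this as an immediate specialization of Theorem \ref{Theorem-main-param}: I need to verify that when one point of $D_H$ is defined over $\K$, the lowest degree of a nontrivial irreducible factor of $H(\Lambda,x,y,0)$ in $\K[\Lambda][x,y]$ is exactly $1$, forcing the algebraic extension $\L$ in that theorem to be $\K$ itself.

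First, I would analyze the structure of $H(\Lambda,x,y,0)$. Since every curve in ${\cal H}(n,D)$ has degree $n$ and meets the line $z=0$ precisely at the $n$ simple points of $D_H=\sum_{i=1}^{n}(a_i:b_i:0)$ (using B\'ezout and the fact that $D_H$ is Hausdorff, exactly as in the proof of Theorem \ref{Theorem-main-param}), the binary form $H(\Lambda,x,y,0)$ of degree $n$ must factor over $\C[\Lambda]$ as
\[ H(\Lambda,x,y,0)=c(\Lambda)\prod_{i=1}^{n}(b_i x - a_i y), \]
for some $c(\Lambda)\in\K[\Lambda]$ (the $\K$-definability of $D$ forces $H\in\K[\Lambda][x,y,z]$ and hence this scalar lives in $\K[\Lambda]$).

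Next, suppose the point $P=(a:b:0)\in D_H$ has $a,b\in\K$. Then the linear form $\ell(x,y):=bx-ay$ lies in $\K[x,y]\subseteq \K[\Lambda][x,y]$, is irreducible there, is nontrivial (non-constant in $x,y$), and divides $H(\Lambda,x,y,0)$ by the factorization above. Therefore the minimum degree of a nontrivial irreducible factor of $H(\Lambda,x,y,0)$ in $\K[\Lambda][x,y]$ is at most $1$, and in fact equal to $1$.

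Finally, I would invoke Theorem \ref{Theorem-main-param}: the proper rational parametrization constructed by the adjoint method requires us to pick a simple point in a $\K$-conjugate family at infinity of cardinality equal to the degree $k$ of some irreducible factor of $H(\Lambda,x,y,0)$. Choosing the family corresponding to the factor $\ell$, we have $k=1$ and the required extension $\L$ has $[\L:\K]=1$, i.e.\ $\L=\K$. The parametrization therefore has coefficients in $\K(\Lambda)$, which is the claim. There is essentially no obstacle here beyond recognizing that a $\K$-rational point at infinity contributes a $\K$-linear factor to $H(\Lambda,x,y,0)$; the work has already been done inside Theorem \ref{Theorem-main-param}.
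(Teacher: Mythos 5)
Your proposal is correct and follows essentially the same route the paper intends: the corollary is stated as an immediate consequence of Theorem \ref{Theorem-main-param}, obtained by using the $\K$-rational simple point of $D_H$ in the adjoint-based parametrization, which is exactly what your observation that such a point contributes a degree-one irreducible factor of $H(\Lambda,x,y,0)$ in $\K[\Lambda][x,y]$ (so $[\L:\K]=1$) amounts to. The factorization $H(\Lambda,x,y,0)=c(\Lambda)\prod_{i=1}^{n}(b_i x-a_i y)$ you use is the same B\'ezout/simple-points-at-infinity argument already present in the proof of that theorem, so no new ingredient is needed.
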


\para

\begin{example}\label{example-cuadrics-param}
Let $D$ be the 4-rational divisor in Example \ref{example-cuadrics-2}.
Since $D_H$ has points in $\Q$, Corollary \ref{corol-rat-point-at-infinity} ensures that $\curva({\cal H}(4,D))$ can be parametrized over $\Q(\Lambda)$. Indeed, if we take $2$-degree adjoints and we use the simple point $(1:0:0)$, we get the parametrization
\[ \left(\frac{A_1(t)}{A_2(t)},\frac{A_3(t)}{A_4(t)},1\right) \]
where

\para

\noindent $A_1(t)=-238\lambda_1{t}^{3}\lambda_2+2240\lambda_2{t}^{2}\lambda_1+98{
\lambda_1}^{2}{t}^{3}-2787{\lambda_2}^{2}{t}^{2}+1470{\lambda_1}^{
2}-6986\lambda_1\lambda_2-539{\lambda_1}^{2}t+8328{\lambda_2}^{2
}+1792\lambda_1\lambda_2t-441{\lambda_1}^{2}{t}^{2}-1209{\lambda_2}^{2}t,$ \\ \\
$A_2(t)= 14\lambda_2 \left( 2\lambda_2-2\lambda_2{t}^{2}-17a_{
{3,1}}{t}^{3}+17\lambda_2t-7\lambda_1t+7\lambda_1{t}^{3}
 \right),$ \\ \\
$A_3(t)= 486\lambda_2+77\lambda_1t+34\lambda_2{t}^{3}+63a_{{0
,3}}{t}^{2}-145\lambda_2t-147\lambda_2{t}^{2}-14\lambda_1{t}^{3}
-210\lambda_1, $\\ \\
$A_4(t)= 14\lambda_2 \left( -1+{t}^{2} \right)$.
\\ \\
\noindent Here $\Lambda=(\lambda_1,\lambda_2)$. \hfill $\Box$
\end{example}

\para

\begin{example}\label{example-cuadrics-param-2}
Let us consider the 4-degree $\Q$-definable rational Hausdorff divisor
\[ D=\sum_{t^4-4=0} (1:t:0)+2\sum_{t^3+1=0} (t:t^2:1). \]
Note that
\[ D_H=\sum_{t^4-4=0} (1:t:0),D_S=2\sum_{t^3+1=0} (t:t^2:1). \]
The rational Hausdorff linear system ${\cal H}(4,D)$ associated to $D$ is given by the polynomial (where $\Lambda=(\lambda_1,\lambda_2)$)

\para

\noindent $H(\Lambda,x,y,z)=-\lambda_1{z}^{4}+\lambda_{2}{y}^{2}{z}^{2}-\lambda_1{y}^{3}z-4a_{{4,0
}}{y}^{4}-3\lambda_1xy{z}^{2}-8\lambda_{2}x{y}^{2}z-4\lambda_{2}{x}^
{2}{z}^{2}-2\lambda_{2}{x}^{2}yz+\lambda_1{x}^{3}z+\lambda_{2}{x}^{4}$.

\para

\noindent Observe that $\dim({\cal H}(4,D))=1$ and compare to Theorem \ref{theorem-dim-rational-hauss-divisor}. We observe that the Hausdorff divisor can be expressed by conjugate families as
\[ D_H=\sum_{t^2-2=0} (1:t:0) +\sum_{t^2+2=0} (1:t:0). \]
Corollary \ref{corol-rat-point-at-infinity} ensures that $\curva({\cal H}(4,D))$ can be parametrized over $\Q(\sqrt{2})(\Lambda)$. Indeed, if we take $2$-degree adjoints and we use the simple point $(1:\sqrt{2}:0)$, we get the parametrization
\[ \left(\frac{A_1(t)}{B(t)},\frac{A_2(t)}{B(t)},1\right) \]
where

\para

\noindent $A_1(t)=\frac{1}{14} (-4+\sqrt {2} )  ( \sqrt {2}{t}^{4}{\lambda_1}
^{2}+33{\lambda_1}^{2}t\sqrt {2}+16t\lambda_{2}\lambda_1-4{\lambda_{2}}^{2}{t}^{3}\sqrt {2}+12\lambda_{2}{t}^{3}\lambda_1+16{t}^{3}
\sqrt {2}{\lambda_1}^{2}+16{t}^{4}\lambda_1\lambda_{2}+12{\lambda_1}
^{2}{t}^{2}\sqrt {2}+32{t}^{2}\lambda_{2}\lambda_1+20{\lambda_1}^{2}
t+14{t}^{4}{\lambda_{2}}^{2}-16{t}^{3}{\lambda_{2}}^{2}+8{t}^{3}{\lambda_1}^{2}+48{\lambda_1}^{2}{t}^{2}+4{t}^{4}{\lambda_1}^{2}+4
\sqrt {2}{t}^{4}\lambda_1\lambda_{2}+4\lambda_{2}\lambda_1t\sqrt {2}+31
{t}^{3}\lambda_{2}\lambda_1\sqrt {2}+8\lambda_{2}{t}^{2}\sqrt {2}\lambda_1+18{\lambda_1}^{2}+8\sqrt {2}{\lambda_1}^{2}),$ \\ \\
$A_2(t)=-\frac{1}{7} ( 1+2\sqrt {2}) ( -16{\lambda_{2}}^{2}{t}^{3}\sqrt {2}+3{\lambda_1}^{2}{t}^{2}\sqrt {2}+8t\lambda_{2}\lambda_1-
12{\lambda_1}^{2}t\sqrt {2}+10\lambda_{2}{t}^{3}\lambda_1-4{t}^{4}
\lambda_1\lambda_{2}-50{t}^{2}\lambda_{2}\lambda_1+6{\lambda_1}^{2}t-7
{t}^{4}{\lambda_{2}}^{2}+8{t}^{3}{\lambda_{2}}^{2}-2{t}^{3}\sqrt {2}
{\lambda_1}^{2}-12{\lambda_1}^{2}{t}^{2}+\sqrt {2}{t}^{4}\lambda_1 \lambda_{2}-16\lambda_{2}\lambda_1t\sqrt {2}+{t}^{3}{\lambda_1}^{2}+16\lambda_{2}{t}^{2}\sqrt {2}\lambda_1-7{\lambda_1}^{2}-20{t}^{3}\lambda_2\lambda_1\sqrt {2}),$ \\ \\
$B(t)=\lambda_{2}t \left( {t}^{3}\lambda_1+12\lambda_1t+4\lambda_{2}{t}^{3}+
16t\lambda_{2}+4{t}^{2}\lambda_1\sqrt {2}+8\lambda_1\sqrt {2}+8\lambda_{2}{t}^{2}\sqrt {2} \right).
  \hfill \Box$

\end{example}

\para

\begin{corollary}\label{corol-lower-dim}
If $\dim({\cal H}(n,D))>0$,  for every $P\in \mathbb{P}^2(\K)$ such that $\overline{\cal H}:={\cal H}(n,D)\cap {\cal H}(n,P)$ is irreducible, then
 $\curva(\overline{\cal H})$ can be rationally and properly parametrized over $\K(\Lambda)$.
\end{corollary}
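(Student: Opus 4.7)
The plan is to adapt the proof of Theorem \ref{Theorem-main-param}, replacing the simple point at infinity (coming from $D_H$, which in general forces an algebraic extension of $\K$) by the $\K$-rational point $P$ that cuts out $\overline{\cal H}$. In other words, $P$ will play the role of the simple point on which the adjoint parametrization algorithm pivots, and because $P$ already has coordinates in $\K$, no field extension will be incurred.

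First, I would observe that, since $\dim({\cal H}(n,D))>0$ and passing through $P$ is a single linear condition on the coefficients of the defining form, the subsystem $\overline{\cal H}$ is non-empty, and by hypothesis it is irreducible, so $\curva(\overline{\cal H})$ is a well-defined irreducible projective curve. Because $\overline{\cal H}\subseteq {\cal H}(n,D)$, the generic curve in $\overline{\cal H}$ has the singularities prescribed by $D_S$ with the multiplicities required in Definition \ref{def-rational-div}; hence, by Theorem \ref{theorem-main-property-genus-divisors}, $\curva(\overline{\cal H})$ is rational. Moreover, since $D_S$ is $\K$-definable, the same argument used in the proof of Theorem \ref{Theorem-main-param} (via Theorem 4.66 in \cite{SWP}) shows that the linear system of $n$-degree adjoint curves to $\curva(\overline{\cal H})$ can be defined over $\K$.

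Next, I would verify that $P$ is a simple $\K$-rational point of $\curva(\overline{\cal H})$. By construction $P$ lies on every member of $\overline{\cal H}$, hence on $\curva(\overline{\cal H})$. If $P$ were already contained in the base locus of ${\cal H}(n,D)$, the defining condition would be redundant, $\overline{\cal H}$ would coincide with ${\cal H}(n,D)$, and we would fall back into the scope of Theorem \ref{Theorem-main-param} (or of Corollary \ref{corol-rat-point-at-infinity} when $P\in D_H$). Otherwise, the condition "passing through $P$" is a proper linear restriction, so the generic curve of $\overline{\cal H}$ has multiplicity exactly one at $P$, meaning that $P$ is a simple point of $\curva(\overline{\cal H})$.

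Finally, I would apply the $n$-degree adjoint parametrization algorithm (Section 4.8 in \cite{SWP}) to $\curva(\overline{\cal H})$, taking $P$ as the simple point that introduces the free parameter. Since the adjoint system, the singular locus coming from $D_S$, and the chosen simple point $P$ are all defined over $\K$, the entire computation takes place inside $\K(\Lambda)$, yielding a proper rational parametrization of $\curva(\overline{\cal H})$ with coefficients in $\K(\Lambda)$. The main delicate point in carrying out this plan is the verification that $P$ remains a simple point of $\curva(\overline{\cal H})$ (rather than being absorbed into the base locus); once this dichotomy is settled, the argument is a direct specialization of the proof of Theorem \ref{Theorem-main-param}.
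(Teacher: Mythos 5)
Your proposal is correct and follows essentially the same route as the paper, whose entire proof is the one-line remark that one uses $P$ as the ($\K$-rational) simple point in the adjoint-based parametrization algorithm of Theorem \ref{Theorem-main-param}; your expansion (adjoint system defined over $\K$, $P$ a simple point of $\curva(\overline{\cal H})$, hence field of parametrization $\K(\Lambda)$) is exactly what that proof leaves implicit. The only imprecise detail is your fallback when $P$ lies in the base locus: Theorem \ref{Theorem-main-param} by itself only yields coefficients in $\L(\Lambda)$ rather than $\K(\Lambda)$, but this degenerate case is outside the intended use of the corollary and is likewise not addressed by the paper.
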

\begin{proof}
It follows by using $P$ in the parametrization algorithm.
\end{proof}

\para

\begin{example}
Let $D$ be as in Example \ref{example-cuadrics-param-2}. Since $\dim({\cal H}(4,D))=1$ we apply Corollary \ref{corol-lower-dim}. We take a point $P:=(a:b:1)\in {\mathbb P}^2(\C)$ and we consider $\overline{\cal H}={\cal H}(4,D)\cap {\cal H}(4,P)$. In order to avoid reducibility, computations show that $P$ has to be taken not satisfying the equation
\[ (a-1-b)(a^2+a+ab-b+1+b^2)=0. \]
The defining polynomial of $\overline{\cal H}$ is

\para

\noindent $\overline{H}(x,y,z)=-24\,xy{z}^{2}a{b}^{2}-6\,xy{z}^{2}{a}^{2}b+24\,x{y}^{2}zab+6\,{x}^{2}
yzab+4\,{y}^{4}-{x}^{4}-12\,xy{z}^{2}{a}^{2}+3\,xy{z}^{2}{a}^{4}+8\,x{
y}^{2}z{b}^{3}-8\,x{y}^{2}z{a}^{3}+12\,{x}^{2}{z}^{2}ab+2\,{x}^{2}yz{b
}^{3}-2\,{x}^{2}yz{a}^{3}+8\,{x}^{3}za{b}^{2}+2\,{x}^{3}z{a}^{2}b-3\,{
y}^{2}{z}^{2}ab-8\,{y}^{3}za{b}^{2}-2\,{y}^{3}z{a}^{2}b+3\,xy{z}^{2}{b
}^{2}-12\,xy{z}^{2}{b}^{4}-4\,{y}^{3}z{b}^{4}-{x}^{3}z{a}^{4}+4\,{x}^{
2}{z}^{2}{b}^{3}+{y}^{3}z{a}^{4}-{y}^{2}{z}^{2}{b}^{3}-4\,{x}^{2}{z}^{
2}{a}^{3}+12\,{y}^{4}ab-{x}^{3}z{b}^{2}-2\,{z}^{4}{a}^{2}b+4\,{x}^{3}z
{b}^{4}+{y}^{3}z{b}^{2}+{y}^{2}{z}^{2}{a}^{3}+8\,x{y}^{2}z-8\,{z}^{4}a
{b}^{2}-4\,{y}^{3}z{a}^{2}-3\,{x}^{4}ab+2\,{x}^{2}yz+4\,{x}^{3}z{a}^{2
}-{y}^{2}{z}^{2}+4\,{x}^{2}{z}^{2}+{z}^{4}{b}^{2}-4\,{z}^{4}{b}^{4}-4
\,{z}^{4}{a}^{2}+{z}^{4}{a}^{4}+4\,{y}^{4}{b}^{3}-4\,{y}^{4}{a}^{3}-{x
}^{4}{b}^{3}+{x}^{4}{a}^{3}. $

\para

In this situation, we consider the system of conics ${\cal H}^*={\cal H}(2,\sum_{t^3+1=0} (t:t^2:1))\cap {\cal H}(2,P)$, that is defined by

\para

\noindent $H^*(t,x,y,z)={z}^{2}{b}^{2}+{z}^{2}a-tyz{b}^{2}-tyza-{y}^{2}+{y}^{2}tb-{y}^{2}ab-{y
}^{2}t{a}^{2}-xz+xztb-xzab-xzt{a}^{2}+xy{b}^{2}+xya+t{x}^{2}{b}^{2}+t{
x}^{2}a.$

\para

Then, the intersection of $\overline{\cal H}$ and ${\cal H}^*$ provides a parametrization of $\curva(\overline{\cal H})$ with coefficients in $\Q(a,b)$; we do not show the output here because it is too large. Instead, we illustrate it with particular values of $P$, for instance $P=(1:1:1)$. In this case, we get the parametrization
\[ \left({\frac {1024+1024 t+960 {t}^{4}+4096 {t}^{2}+4352 {t}^{3}}{
-4 \left( 16-16 t-4 {t}^{2} \right)  \left( 16+16 t+12 {t}^{2}
 \right) }}, {\frac {1024+2048 t-192 {t}^{4}+3584 {t}^{2}+512
{t}^{3}}{ 4\left( 16-16 t-4 {t}^{2} \right)  \left( 16+16 t+12 {t}^
{2} \right) }}, 1 \right).
\]
 Similarly, for $P=(0:1:1)$ we get
\[ \left(-\frac{1}{2}\,{\frac {18-15\,t+6\,{t}^{2}+74\,{t}^{3}+21\,{t}^{4}}{ \left( -{
t}^{2}-6\,t+9 \right)  \left( 3\,{t}^{2}+2\,t+1 \right) }},\frac{1}{2}\,{
\frac {-3\,t-6\,{t}^{4}+75\,{t}^{2}+23\,{t}^{3}}{ \left( -{t}^{2}-6\,t
+9 \right)  \left( 3\,{t}^{2}+2\,t+1 \right) }}, 1 \right).
\]
Both parametrizations have coefficients in $\Q$.
\hfill $\Box$
\end{example}

\para

Let us assume that $D$ is monomial, then one can parametrize $\curva({\cal H}(n,D))$ by lines (see Section 4.6 in \cite{SWP}). In addition, since $D$ is $\K$ definable, then the field of  parametrization is $\K(\Lambda)$. Therefore, one has the following theorem.

\para

\begin{theorem}\label{theorem-param-monomial}
Let $D$ be monomial, then there exists a rational proper parametrization of $\curva({\cal H}(n,D))$ with coefficients in  $\K(\Lambda)$.
\end{theorem}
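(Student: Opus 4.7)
The plan is to invoke the classical parametrization by lines through a singularity of maximal multiplicity, and then verify that $\K$-definability forces every ingredient to live over $\K(\Lambda)$.

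First, I would unpack what "monomial" means in this setting. Since $D=D_H+D_S$ is a monomial rational Hausdorff divisor, $D_S=(n-1)P$ for a single affine point $P$. The $\K$-definability of $D_S$ means its singular support decomposes into $\K$-conjugate families; but a single-point family is $\K$-conjugate precisely when the point is $\K$-rational, so $P\in\mathbb{P}^2(\K)$. Likewise, the $\K$-definability of $D$ implies the defining polynomial $H(\Lambda,x,y,z)$ lies in $\K[\Lambda][x,y,z]$. We may assume $P$ is not on the line $z=0$ (it is affine by the definition of a rational Hausdorff divisor); after a harmless $\K$-linear change of coordinates, take $P=(0:0:1)$.

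Second, I would run the parametrization-by-lines procedure of Section 4.6 in \cite{SWP} on the curve $\curva({\cal H}(n,D))$, which by the irreducibility hypothesis and Theorem \ref{theorem-main-property-genus-divisors} is an irreducible rational curve of degree $n$ over the algebraic closure of $\C(\Lambda)$. By construction every curve in ${\cal H}(n,D)$ has a point of multiplicity at least $n-1$ at $P$; combined with irreducibility and the genus formula, this forces $P$ to be an ordinary $(n-1)$-fold point of $\curva({\cal H}(n,D))$. Consider the pencil of lines $\{y=tx\}$ through $P$, parametrized by $t$. Dehomogenising, $H(\Lambda,x,tx,1)=x^{n-1} Q(\Lambda,t,x)$ where $Q$ is a polynomial of degree $1$ in $x$ whose coefficients are polynomials in $\K[\Lambda][t]$. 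Solving the linear equation $Q(\Lambda,t,x)=0$ for $x$ yields
\[ x(t)=\frac{A(\Lambda,t)}{B(\Lambda,t)},\qquad y(t)=t\,x(t), \]
with $A,B\in\K[\Lambda][t]$, providing a rational parametrization of $\curva({\cal H}(n,D))$ with coefficients in $\K(\Lambda)$.

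Third, I would verify properness. The pencil $\{y=tx\}$ is a $1$-parameter family of lines through the $(n-1)$-fold point $P$, and by B\'ezout each such line meets $\curva({\cal H}(n,D))$ in exactly one residual point (counted with multiplicity $1$); distinct values of $t$ correspond to distinct lines and hence, generically, to distinct residual points, so the map $t\mapsto(x(t),y(t))$ is birational onto the curve. This gives properness, completing the proof.

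The only potentially delicate point is ensuring $P$ is a genuine $(n-1)$-fold point (not of higher multiplicity, which would force reducibility) and that the extra intersection genuinely depends on $t$; both follow from the irreducibility of $D$ (Theorem \ref{theorem-irred-2}) and the degree/multiplicity bookkeeping in the genus formula. Everything else is purely mechanical once $P\in\mathbb{P}^2(\K)$ has been secured from the $\K$-definability hypothesis.
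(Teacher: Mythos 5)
Your proposal is correct and follows essentially the same route as the paper: the paper's own argument is precisely to parametrize $\curva({\cal H}(n,D))$ by the pencil of lines through the $(n-1)$-fold point (Section 4.6 of \cite{SWP}) and to observe that $\K$-definability of the monomial divisor puts that point, and hence the resulting proper parametrization, over $\K(\Lambda)$. Your write-up merely makes the bookkeeping explicit (the factorization $H(\Lambda,x,tx,1)=x^{n-1}Q$ and the birationality of the residual intersection), which is consistent with the paper; the only cosmetic quibble is that ordinariness of the $(n-1)$-fold point is neither needed nor actually forced, only that its multiplicity is exactly $n-1$, which irreducibility guarantees.
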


\para

\begin{example}\label{example-param-monomial}
We consider the divisor
\[ D= \sum_{t^4+1=0} (t:1:0) +3(0:0:1). \]
$D$ is a $\Q$-definable $4$-monomial Hausdorff divisor. So, by Corollary \ref{col-dim-monomial-rat-haus-div}, $\dim({\cal H}(4,D))=4$. Indeed, the defining polynomial of ${\cal H}(4,D)$ is
\[ H=\lambda_{1}{y}^{3}z+\lambda_{2}{y}^{4}+\lambda_{3}x{y}^{2}z+a_{{2,1}}{x}^{2}
yz+\lambda_{4}{x}^{3}z+\lambda_{2}{x}^{4}.
\]
We observe that $H$ is irreducible over $\C$, and hence $D$ is irreducible (see Theorem \ref{theorem-irred-2}).
Now, parametrizing with the pencil of lines $ty+x=0$ one gets the parametrization of the linear system
\[ \cP(t)=\left(\frac{-t \left( t\lambda_{3}-\lambda_{1}+{t}^{3}\lambda_{4}-{t}^{2}a_{{2,1}}
 \right)}{\lambda_{2} \left( {t}^{4}+1 \right)} ,\frac{t\lambda_{3}-\lambda_{1}+{t}^{3}\lambda_{4}-{t}^{2}a_{{2,1}}}{\lambda_{2} \left( {t}^{4}+1 \right)},1 \right). \]
\end{example}

\para

%\begin{theorem}\label{theorem-param-rat-point}
%Let $D_H$ have at least a point over $\K$, then there exists a rational proper parametrization of $\curva({\cal H}(n,D))$ with coefficients in  %$\K(\Lambda)$.
%\end{theorem}
%\begin{proof} It follows from Section 4.7 in \cite{SWP}.
%\end{proof}

%\para

\begin{theorem}\label{theorem-param-rat-sing-point}
Let $D_S$ have at least a triple point over $\K$, then there exists a rational proper parametrization of $\curva({\cal H}(n,D))$ with coefficients in  $\K(\Lambda)$.
\end{theorem}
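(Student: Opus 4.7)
The plan is to adapt the argument of Theorem \ref{Theorem-main-param} so as to avoid introducing any algebraic extension of $\K$ in the parametrization. By Theorem \ref{theorem-main-property-genus-divisors} the curve $\curva({\cal H}(n,D))$ is rational and, by the $\K$-definability of $D_S$, its associated linear system of $(n-2)$-degree adjoints is defined over $\K$ (Theorem 4.66 of \cite{SWP}). Applying the adjoint parametrization algorithm of Section 4.8 of \cite{SWP} inside $\K(\Lambda)$, the only possible source of an additional extension is the choice of the simple base points that must be added to the adjoint linear system in order to cut it down to a pencil with a unique residual intersection with $\curva({\cal H}(n,D))$.

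The triple $\K$-rational point $P$ in $D_S$ supplies precisely what is needed to bypass that step. Since $P$ has multiplicity exactly $3$ on $\curva({\cal H}(n,D))$ and is $\K$-rational, the three places of the normalization lying over $P$ form a $\K(\Lambda)$-rational divisor of degree $3$. Together with the $\K(\Lambda)$-rational degree-$(n-2)$ free-intersection divisor cut out by any single $\K(\Lambda)$-rational adjoint, and (if necessary) with the degree-$n$ hyperplane divisor, these provide $\K(\Lambda)$-rational divisors whose degrees have greatest common divisor~$1$. A standard Riemann--Roch argument on the genus-$0$ normalization then produces a $\K(\Lambda)$-rational divisor of degree $1$, hence a $\K(\Lambda)$-isomorphism between the normalization and $\mathbb{P}^1_{\K(\Lambda)}$; pulling this back through the birational morphism from the normalization to $\curva({\cal H}(n,D))$ yields a proper rational parametrization over $\K(\Lambda)$.

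Algorithmically, I would realize this within the adjoint framework by replacing the external simple base points of the standard construction by reinforced conditions at $P$ itself: consider the sublinear system of degree-$(n-2)$ adjoints whose multiplicity at $P$ is strictly greater than the adjoint minimum of $2$, so that the ``excess freedom'' of the adjoint pencil is absorbed by $\K$-rational higher-order contact conditions at $P$ rather than by simple base points defined over an algebraic extension. The irreducibility of $D$ (Theorem \ref{theorem-irred-2}) then guarantees that the resulting pencil cuts $\curva({\cal H}(n,D))$ in a single residual point depending rationally and bijectively on the pencil parameter.

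The main obstacle I expect is the dimension count together with the verification of properness: one has to check that, after imposing the reinforced conditions at $P$, the modified adjoint system has dimension exactly $1$ and that its residual intersection defines a birational map from a Zariski-open subset of $\mathbb{A}^1_{\K(\Lambda)}$ onto $\curva({\cal H}(n,D))$. The hypothesis that $P$ has multiplicity exactly $3$ is essential here, since it provides just enough room at $P$ to absorb the additional base-point conditions over $\K$ without making the adjoint pencil empty or reducible; once these checks are in place, the parametrization is automatically defined over $\K(\Lambda)$, as every ingredient of its construction is.
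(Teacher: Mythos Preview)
Your abstract Riemann--Roch argument in the second paragraph is sound in principle: on a genus-zero curve over $\K(\Lambda)$, rational divisors of coprime degrees $3$, $n-2$, $n$ do yield a $\K(\Lambda)$-rational divisor of degree $1$, hence a $\K(\Lambda)$-parametrization. However, your ``algorithmic realization'' in the third paragraph has a genuine gap. Raising the multiplicity condition at $P$ from $2$ to $3$ in the $(n-2)$-adjoint system imposes exactly $\binom{4}{2}-\binom{3}{2}=3$ additional linear conditions, whereas cutting the adjoint system down to a pencil requires $n-3$ additional conditions. These numbers agree only when $n=6$; for all other $n$ your reinforced-multiplicity scheme either fails to reach a pencil or overshoots it, so the dimension count you flag as ``the main obstacle'' actually fails in general.

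The paper's proof is shorter and more constructive, and it is worth knowing because it is the standard device in this situation (Section~3.3 of \cite{SWP}). Take any line $L$ defined over $\K$ through the triple point $P$. By B\'ezout, $L$ meets $\curva({\cal H}(n,D))$ in $n$ points counted with multiplicity; the triple point $P$ accounts for $3$ of them, and the residual intersection is a $\K(\Lambda)$-conjugate family of exactly $n-3$ simple points of the curve. These are precisely the simple base points required by the $(n-2)$-degree adjoint parametrization algorithm (Section~4.7 of \cite{SWP}), and since they arise as a conjugate family over $\K(\Lambda)$ the construction introduces no field extension. This single line section replaces both your abstract divisor gymnastics and your problematic multiplicity-raising step.
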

\begin{proof} Using the triple point one can generate families of $(n-3)$ conjugate points over $\K(\Lambda)$ (see Section 3.3 in \cite{SWP}) to afterwards parametrize with $(n-2)$-degree adjoint curves (see Section 4.7 in \cite{SWP}).
\end{proof}

\section{Application to the Approximate Parametrization Problem}

Given a non-rational irreducible curve, the approximate parametrization problem consists in providing a rational curve being at close Hausdorff distance of the input curve; see the Introduction for further details.
In this section we show, as a sample of application of the ideas developed, that every Hausdorff curve (see definition below) can always be parametrized approximately.

\para

\begin{definition}\label{def-curva-hauss}
We say that an affine  plane algebraic curve $\cc$ is a {\sf Hausdorff curve} if $\card(\cc^{\infty})=\deg(\cc)$; recall that $\cc^{\infty}$ denotes the points at infinity of $\cc$.
\end{definition}

\para

\begin{example}\label{ex-curva-haus}
Observe that all lines are Hausdorff and the only conics that are not Hausdorff are the parabolas. For degree 3 or higher the number of possibilities increases.
\end{example}

\para

\begin{remark}\label{remark-haus-curve}
Observe that, if $\cc$ is a $\K$-definable Hausdorff curve of degree $n$, then
\[ D=\sum_{P\in \cc^{\infty}} P \]
is an $n$-degree $\K$-definable Hausdorff divisor. We call $D$ the {\sf Hausdorff divisor associated to $\cc$}.
\end{remark}

\para

The following theorem states the main property of Hausdorff curves.

\para

\begin{theorem}\label{theorem-haus-curve}
Let  $\cc$ be a real irreducible affine Hausdorff curve of degree $n$, and let $D=\sum_{i=1}^{n}  (a_i:b_i:0)$ be its associated Hausdorff divisor. Then, for every point $P=(a:b:1)\in {\mathbb P}^2(\C)$, such that $ab_i-ba_i\neq 0$ with $i=1,\ldots,n$, $D+(n-1)P$ is an irreducible Hausdorff monomial divisor.
\end{theorem}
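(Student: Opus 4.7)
Three things have to be verified: that $D+(n-1)P$ has the split shape of a rational Hausdorff divisor, that its rational part is monomial, and that it is irreducible. The first two are bookkeeping. Since $\cc$ is a Hausdorff curve of degree $n$, the points $(a_i:b_i:0)$ are $n$ distinct points on $z=0$, so $D$ is an $n$-degree Hausdorff divisor (Def.~\ref{def-divisor-hau}). The point $P$ is affine, and the identity $(n-1)(n-2)=(n-1)((n-1)-1)$ verifies Def.~\ref{def-rational-div}; since $(n-1)P$ is supported at a single point, this summand is monomial.

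For irreducibility, by Theorem~\ref{theorem-irred-2} it is enough to exhibit an irreducible member of the linear system ${\cal H}(n,D+(n-1)P)$. My candidate, writing $c_i:=b_ia-a_ib$ (nonzero by hypothesis on $P$), would be
\begin{equation*}
F(x,y,z) \; = \; z\,(bx-ay)^{n-1} \; - \; \prod_{i=1}^{n}(b_ix-a_iy-c_iz).
\end{equation*}
Evaluation gives $F(a_j,b_j,0)=0$ for each $j$, so $F$ vanishes at every point at infinity of $D$. To locate the multiplicity at $P$, I would translate to coordinates $X=x-az$, $Y=y-bz$ centred at $P$; in these coordinates $bx-ay=bX-aY$ and each $b_ix-a_iy-c_iz=b_iX-a_iY$, so dehomogenising at $z=1$ gives
\begin{equation*}
F(X+a,Y+b,1) \; = \; (bX-aY)^{n-1}-\prod_{i=1}^{n}(b_iX-a_iY),
\end{equation*}
whose lowest-degree part is the nonzero form $(bX-aY)^{n-1}$ of degree $n-1$ (nonzero because $c_i\neq 0$ forces $(a,b)\neq(0,0)$). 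Hence $\mult_P(F)=n-1$ and $F\in{\cal H}(n,D+(n-1)P)$.

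The main obstacle is showing $F$ is irreducible over $\C$; my plan is to exploit the pencil of lines through $P$. Substituting $(x,y,z)=(a+\mu s,b+\nu s,1)$ into $F$ yields
\begin{equation*}
F(a+\mu s,b+\nu s,1) \; = \; s^{n-1}\Bigl[(b\mu-a\nu)^{n-1}-s\prod_{i=1}^{n}(b_i\mu-a_i\nu)\Bigr],
\end{equation*}
so a generic line through $P$ meets $\{F=0\}$ at $P$ with multiplicity $n-1$ and at one additional simple point. Solving for that extra root produces a rational map $\phi:\mathbb{P}^1\dashrightarrow\mathbb{P}^2$ whose image lies in $\{F=0\}$, and when $(\mu:\nu)\to(a_i:b_i)$ the extra point escapes to infinity in direction $(a_i:b_i:0)$; thus the image is an irreducible curve meeting each of the $n$ distinct points of $D$, which forces its degree to be at least $n$. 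On the other hand $F(x,y,0)=-\prod_i(b_ix-a_iy)$ is a product of $n$ distinct linear forms and $z$ does not divide $F$, so $F$ is squarefree and $\{F=0\}$ is a reduced curve of degree exactly $n$. A reduced degree-$n$ curve containing an irreducible subcurve of degree $n$ must coincide with it, so $F$ is irreducible, and by Theorem~\ref{theorem-irred-2} the divisor $D+(n-1)P$ is irreducible.
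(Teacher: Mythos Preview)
Your proof is correct and follows the same architecture as the paper's: verify the Hausdorff--monomial structure, reduce irreducibility of the divisor to exhibiting one irreducible curve in ${\cal H}(n,D+(n-1)P)$ (the paper cites Corollary~\ref{corollary-irred}, you cite Theorem~\ref{theorem-irred-2}, both together with the observation from the proof of Lemma~\ref{lemma-irred} that a nontrivial $\C$-factor of the defining polynomial would force every member to be reducible), and construct essentially the same witness curve. Indeed, after your translation $X=x-az$, $Y=y-bz$ your $F$ becomes $z(bX-aY)^{n-1}-\prod_i(b_iX-a_iY)$, which is exactly the curve of Lemma~\ref{lemma-irred} with auxiliary direction $(a:b:0)$; the paper's proof simply says ``translate $P$ to $(0:0:1)$ and proceed as in Lemma~\ref{lemma-irred}''. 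The one place you diverge is in justifying the irreducibility of this explicit $F$: Lemma~\ref{lemma-irred} just asserts it (the implicit argument being that $F=zA-B$ with $A=(bX-aY)^{n-1}$ and $B=\prod_i(b_iX-a_iY)$ coprime in $\C[X,Y]$, so any $\C[X,Y]$-factor of $F$ would divide $\gcd(A,B)=1$), whereas you parametrise by the pencil of lines through $P$ and run a B\'ezout degree count on the image. Both are valid; the coprime-forms route is shorter, while yours makes the role of the hypothesis $ab_i-ba_i\neq 0$ more transparent --- it is exactly what keeps the numerator of $s$ from vanishing at $(\mu:\nu)=(a_i:b_i)$ and sends the residual point to $(a_i:b_i:0)$.
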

\begin{proof} Let $\overline{D}=D+(n-1)P$. Taking $\overline{D}_H=D$ and $\overline{D}_S=(n-1)P$, one has that $\overline{D}$ is Hausdorff and monomial. In order to prove that $\overline{D}$ is irreducible, by Corollary \ref{corollary-irred}, we prove that the defining polynomial $\overline{H}(\Lambda,x,y,z)$ of ${\cal H}(n,\overline{D})$ is irreducible over $\C(\Lambda)$. Moreover, since $P$ has coefficients in $\C$, we can consider w.l.o.g. that $P=(0:0:1)$; otherwise one performs a suitable linear change over $\C$. In this situation, the proof is analogous to the proof of Lemma \ref{lemma-irred}.
\end{proof}

\para

The next result follows from Theorem \ref{theorem-haus-curve}, Theorem \ref{theorem-param-monomial}, Theorem \ref{theorem-haus-divisor}, and Corollary \ref{col-dim-monomial-rat-haus-div}.

\para

\begin{corollary}\label{corollary-haus-curve-1}
Let $\K$ be a subfield of $\C$, let $\cc$ be a real irreducible affine $\K$-definable Hausdorff curve of degree $n$. Then, there exist infinitely many real monomial plane curves $\dd$, parametrizable over $\K$, such that $\ddd(\cc\cap \R^2,\dd\cap \R^2)<\infty$. Furthermore, for any fixed point $P$, chosen  as in Theorem \ref{theorem-haus-curve}, the dimension of the linear system of $n$-degree monomial curves, having $P$ as singular point, is $n$. That is, for every $P$ chosen as above, there exists an $n$-dimensional linear system where all irreducible curves are solutions of the approximate parametrization problem applied to $\cc$.
\end{corollary}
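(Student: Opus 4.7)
The plan is to assemble the four cited results in sequence. Let $D=\sum_{i=1}^{n}(a_i:b_i:0)$ be the Hausdorff divisor of $\cc$ (see Remark \ref{remark-haus-curve}); it is $\K$-definable because $\cc$ is. First I choose an affine point $P=(a:b:1)\in\mathbb{P}^2(\K)$ with $ab_i-ba_i\neq 0$ for every $i$. Such points exist in profusion since $\K$ is infinite (as $\Q\subseteq\K$) and the forbidden locus is a union of $n$ lines in the chart $z=1$. By Theorem \ref{theorem-haus-curve}, $\overline{D}:=D+(n-1)P$ is then an irreducible Hausdorff $n$-monomial divisor, and $\K$-definability of both summands makes $\overline{D}$ $\K$-definable.

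Next, Corollary \ref{col-dim-monomial-rat-haus-div} gives $\dim({\cal H}(n,\overline{D}))\geq n$. I would prove equality by writing ${\cal H}(n,\overline{D})={\cal H}(n,(n-1)P)\cap{\cal H}(n,D)$; the first factor has dimension exactly $2n$ (the standard count for curves of degree $n$ with an $(n-1)$-fold prescribed point), and the $n$ simple-point conditions imposed by $D$ are independent under the hypothesis $ab_i-ba_i\neq 0$, yielding $\dim({\cal H}(n,\overline{D}))=n$. Theorem \ref{theorem-param-monomial} applied to the $\K$-definable monomial divisor $\overline{D}$ then furnishes a rational proper parametrization of $\curva({\cal H}(n,\overline{D}))$ with coefficients in $\K(\Lambda)$, where $\Lambda$ is the tuple of $n$ homogeneous parameters describing the system.

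Every specialization $\Lambda_0\in(\K\cap\R)^n$ produces a real curve $\dd\in{\cal H}(n,\overline{D})$ defined over $\K$, together with a concrete rational parametrization of $\dd$ over $\K$ obtained by substitution; by construction $\dd$ has an $(n-1)$-fold point at $P$, so it is monomial. Because $\overline{D}$ is irreducible, Corollary \ref{corollary-irred} ensures that the values of $\Lambda_0$ for which $\dd$ is reducible (or drops affine degree) form a proper algebraic subset of the parameter space, so infinitely many specializations yield irreducible $\dd$ of full degree $n$. For each such $\dd$ we have $\dd\in{\cal H}(n,\overline{D})\subseteq{\cal H}(n,D)$, so both $\cc$ and $\dd$ share their $n$ distinct points at infinity and are real, irreducible, of affine degree $n$; Theorem \ref{theorem-haus-divisor} then delivers $\HH(\cc\cap\R^2,\dd\cap\R^2)<\infty$. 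This establishes the first two assertions, and the $n$-dimensional linear system promised by the third assertion is exactly ${\cal H}(n,\overline{D})$.

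The part requiring the most care, I expect, is the dimension equality $\dim({\cal H}(n,\overline{D}))=n$ together with the subsequent generic-specialization step: both amount to verifying that the $n$ incidence conditions at infinity are linearly independent, and that the bad loci in parameter space (reducibility, degree drop, absence of real points) are proper algebraic subvarieties, so that their complement in $(\K\cap\R)^n$ remains infinite. Everything else is bookkeeping that directly invokes the cited theorems.
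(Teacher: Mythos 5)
Your proposal is correct and follows essentially the same route as the paper, which obtains the corollary precisely by combining Theorem \ref{theorem-haus-curve}, Corollary \ref{col-dim-monomial-rat-haus-div}, Theorem \ref{theorem-param-monomial} and Theorem \ref{theorem-haus-divisor}. You additionally make explicit the equality $\dim({\cal H}(n,D+(n-1)P))=n$ (the paper only cites the bound $\geq n$; independence of the $n$ simple conditions really comes from the $n$ points at infinity being distinct, not from $ab_i-ba_i\neq 0$) and the genericity of the specializations, which is a harmless and welcome amplification of the paper's citation-style argument.
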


\para

From the previous result, one may proceed as follows. Let us say that we are given a real irreducible affine Hausdorff curve $\cc$ of degree $n$, and we want to provide a rational curve $\dd$, at finite Hausdorff distance of $\cc$, passing through a fixed affine point $P$. We may assume that $P$ satisfies the conditions in Theorem \ref{theorem-haus-curve}, otherwise we apply an small perturbation to $P$. In this situation, one computes
$\overline{\cal H}={\cal H}(n, D+(n-1)P)$, where $D$ is the Hausdorff divisor associated to $\cc$. We know that almost all curves in $\overline{\cal H}$ are irreducible, and hence rational. Moreover, we know that $\dim(\overline{\cal H})=n$. That is, we still have $n$ degrees of freedom to chose a suitable (under the requirements stated by the user) rational curve to our particular problem. For instance, one may look for a rational curve in $\overline{\cal H}$ under the criterium of minimizing the Hausdorff distance, or reducing the length of the coefficients in the parametrization, or passing thought the ramification points of $\cc$, or having particular tangents at particular points, etc.

\para

We finish this section, illustrating these ideas by an example

\para

\begin{example}\label{ex-param-approx}
We consider the affine curve $\cc$ defined by
\[4+2 y-5 {y}^{2}-9 {y}^{3}+6 {y}^{4}+x-7 xy-5 x{y}^{2}-6 {x}^{2}
+6 {x}^{2}y-3 {x}^{3}-6 {x}^{4}.\]
$\cc$ is real, irreducible and  has degree 4. Moreover, $\cc^{\infty}=\{ (1:\pm 1:0),(1:\pm \ii:0)\}$. Therefore, $\cc$ is Hausdorff and its associated Hausdorff divisor is
\[ D=\sum_{t^4=1}(1:t:0). \]
On the other hand, $\cc$ has genus 3; i.e. it is smooth.  We take a point $P=(41/64: -1/32: 1)$ satisfying the conditions of Theorem \ref{theorem-haus-curve}; $P$ has been taken as an approximation of a ramification points of $\cc$. Then, $\overline{D}=D+3P$ is an irreducible Hausdorff monomial divisor. The associated linear system ${\cal H}(4,\overline{D})$ is defined by (here $\Lambda=(\lambda_1,\ldots,\lambda_5)$)

\para

\noindent $H(\Lambda,x,y,z)=\lambda_{{4}}{x}^{3}z+{\frac {2825745}{524288}} y{z}^{3}\lambda_{{4}}
+1024 x{y}^{2}z\lambda_{{2}}+1312 x{y}^{2}z\lambda_{{3}}+{\frac {
1681}{4}} {y}^{2}{z}^{2}\lambda_{{3}}-\lambda_{{4}}{y}^{4}+13448 {y}
^{3}z\lambda_{{3}}+{\frac {7236657}{512}} {y}^{3}z\lambda_{{4}}+
\lambda_{{4}}{x}^{4}+32 {x}^{2}yz\lambda_{{3}}-{\frac {68921}{2048}}
 xy{z}^{2}\lambda_{{4}}+\lambda_{{3}}{x}^{2}{z}^{2}+1312 {y}^{2}{z}^
{2}\lambda_{{2}}+\lambda_{{2}}x{z}^{3}+41 xy{z}^{2}\lambda_{{3}}+
32768 {y}^{3}z\lambda_{{1}}+96 y{z}^{3}\lambda_{{1}}+3072 {y}^{2}{z
}^{2}\lambda_{{1}}+64 xy{z}^{2}\lambda_{{2}}+{\frac {8979}{64}} {x}^
{2}yz\lambda_{{4}}+\lambda_{{1}}{z}^{4}-{\frac {2825809}{16384}} {y}^
{2}{z}^{2}\lambda_{{4}}+{\frac {41}{2}} y{z}^{3}\lambda_{{2}}+{\frac
{149609}{64}} x{y}^{2}z\lambda_{{4}}+20992 {y}^{3}z\lambda_{{2}}.$

\para

As expected, $\dim({\cal H}(4,\overline{D})=4$. Moreover, for every $\Lambda_0\in \C^5$, such that $H(\Lambda_0,x,y,z)$ is irreducible over $\C$, we
get a monomial curve. Furthermore, the affine curve $H(\Lambda_0,x,y,1)$ is monomial and is at finite distance of $\cc$. Since we have 4 degrees of freedom, we choose the curve such that it passes through 4  points of $\cc$. We intersect $\cc$ with the lines $y=\pm 3$ to get
\[  Q_1=\left({\frac {89}{32}}:-3:1\right), Q_2=\left(-{\frac {101}{32}}:-3:1\right),Q_3=\left({\frac {65}{32}}:3:1\right),Q_4=\left(-{\frac {103}{32}}:3:1\right).
\]
We consider $\overline{\overline{\cal H}}={\cal H}(4,D+3P+Q_1+Q_2+Q_3+Q_4)$. We note that $\dim(\overline{\overline{\cal H}})=0$ and consists in the curve

\para

\noindent
\begin{flushleft}
$G(x,y,z)=-11189780504385617373808 y{z}^{3}-64177446384507906894080 {y}^{2}{z}
^{2}+25328929045126690271232 {y}^{3}z-68315663351181964574720 {x}^{3
}z+69446473202369720695808 {x}^{2}{z}^{2}-30949472647714110913696 x{
z}^{3}-24897211394328530780160 {y}^{4}+24897211394328530780160 {x}^{
4}+28677478743593794827264 xy{z}^{2}+104113819442735106875392 x{y}^{
2}z-17303699534378810261504 {x}^{2}yz+5094649843686955824985 {z}^{4}, $
\end{flushleft}

\begin{figure}[h]
  \centering
\includegraphics[width=4.5cm]{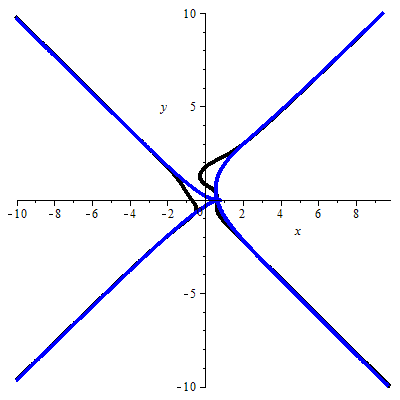} \includegraphics[width=4.5cm]{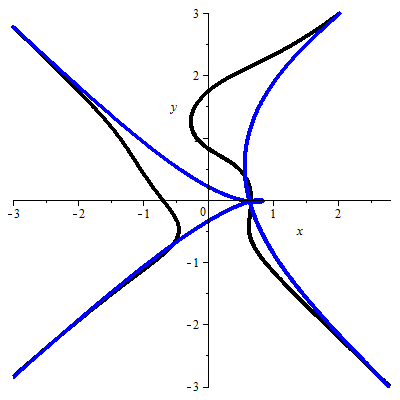}\includegraphics[width=4.5cm]{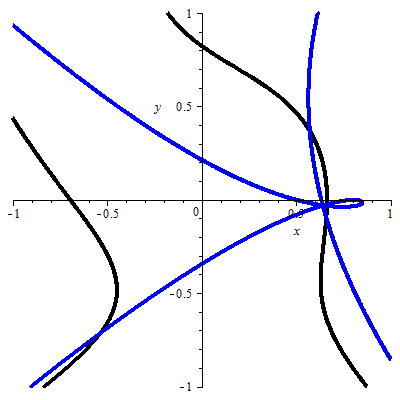}
\caption{Plot of $\cc$ and $\dd$ in Example \ref{ex-param-approx}. {\sf Left:} general view. {\sf Center:} Non-asymptotic area. {\sf Left:} Singular area.}
\label{figure-param}
\end{figure}

\para

\noindent that is rational and can be parametrized by lines through $P$ as
\[ \left(\frac {1}{94975324227632640}\frac{A_1(t)}{B(t)},- \frac{1}{47487662113816320}\frac{A_2(t)}{B(t)},1\right) \]
where

\para

\noindent \begin{flushleft}  $A_1(t)=208880643591165188824385+1309845452973236446822400t^4+3152348304551138336556032t^2+1326609920992631925943776t
+3321871574175160774459392t^3$,
\end{flushleft}

\para

\noindent \begin{flushleft}  $A_2(t)=30949472647714110913696t+68315663351181964574720t^3+69446473202369720695808t^2+5094649843686955824985+24897211394328530780160t^4$
\end{flushleft}

\para

\noindent $B(t)=2825745+41312256t^2+42991616t^3+16777216t^4+17643776t$.

\para

Furthermore, the affine curve defined $\dd$ by $G(x,y,1)$ is at finite distance of $\cc$ (see Figure \ref{figure-param}). \hfill$\Box$.

\end{example}

\para

\noindent {\bf Acknowledgments:} This work was developed, and partially supported, under the research project
MTM2011-25816-C02-01. All authors belongs to  the Research Group ASYNACS (Ref. CCEE2011/R34).

\end{document}